\def\NZQ{\mathbb}               
\def\NN{{\NZQ N}}
\def\QQ{{\NZQ Q}}
\def\ZZ{{\NZQ Z}}
\def\CC{{\NZQ C}}
\newtheorem{Theorem}{Theorem}[section]
\newtheorem{Lemma}[Theorem]{Lemma}
\newtheorem{Corollary}[Theorem]{Corollary}
\newtheorem{Example}[Theorem]{Example}
\let\epsilon\varepsilon
\let\phi=\varphi
\let\kappa=\varkappa
\def \a {\alpha}
\def \s {\sigma}
\def \d {\delta}
\def \g {\gamma}
\begin{document}

\title{Algebraic series and valuation rings\\ over nonclosed fields}

\author{Steven Dale Cutkosky  and Olga Kashcheyeva}
\thanks{The first author was partially supported by NSF }


\maketitle

\section{Introduction}

Suppose that $k$ is an arbitrary field. Consider the field
$k((x_1,\ldots,x_n))$, which is the quotient field of the  ring
$k[[x_1,\ldots,x_n]]$ of formal power series  in the variables
$x_1,\ldots,x_n$, with coefficients in $k$. Suppose that
$\overline k$ is an algebraic closure of $k$, and $\sigma\in
\overline k[[x_1,\ldots,x_n]]$ is a formal power series. In this
paper, we give a very simple necessary and sufficient condition
for $\sigma$ to be algebraic over $k((x_1,\ldots,x_n))$. We  prove the following theorem, which is restated in an equivalent formulation in Theorem  \ref{multi}.

\begin{Theorem}\label{varfin} Suppose that $k$ is a field of characteristic $p\ge0$,
with algebraic closure $\overline k$.  Suppose that
$$
\sigma(x_1,\ldots,x_n)=\sum_{i_1,\ldots,i_n\in\NN}\alpha_{i_1,\ldots,i_n}
x_1^{i_1}x_2^{i_2}\cdots x_n^{i_n}\in \overline k[[x_1,\ldots,x_n]]
$$
where  $\alpha_{i_1,\ldots,i_n}\in \overline k$ for all $i$. Let
$$
L= k(\{\alpha_{i_1,\ldots,i_n}\mid i_1,\ldots,i_n\in\NN\})
$$
 be the extension field of $k$
generated by the coefficients of $\sigma(x_1,\ldots,x_n)$. Then
$\sigma(x_1,\ldots,x_n)$ is algebraic over $k((x_1,\ldots,x_n))$
if and only if there exists $r\in\NN$ such that
$[kL^{p^r}:k]<\infty$, where $kL^{p^r}$ is the compositum of $k$
and $L^{p^r}$ in $\overline k$.

\end{Theorem}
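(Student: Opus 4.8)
The plan is to prove the two implications separately; the first is routine and the second carries the weight.

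\emph{Sufficiency.} Suppose $[kL^{p^r}:k]<\infty$ (in characteristic $0$ this is to be read as $[L:k]<\infty$, i.e. the case $r=0$), and put $M=kL^{p^r}$, a finite extension of $k$. Since $\alpha_I^{p^r}\in L^{p^r}\subseteq M$ and, in characteristic $p$, $\sigma^{p^r}=\sum_I\alpha_I^{p^r}x_1^{p^ri_1}\cdots x_n^{p^ri_n}$, we may write $\sigma^{p^r}=\tau(x_1^{p^r},\dots,x_n^{p^r})$ with $\tau=\sum_I\alpha_I^{p^r}y_1^{i_1}\cdots y_n^{i_n}\in M[[y_1,\dots,y_n]]$. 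As $M/k$ is finite, $M[[y_1,\dots,y_n]]=M\tensor_k k[[y_1,\dots,y_n]]$, so $M((y_1,\dots,y_n))=M\tensor_k k((y_1,\dots,y_n))$ is a finite-dimensional algebra over $k((y_1,\dots,y_n))$ and $\tau$ satisfies a nonzero polynomial over it. Substituting $y_i\mapsto x_i^{p^r}$ (an embedding $k((y_1,\dots,y_n))\hookrightarrow k((x_1,\dots,x_n))$) shows $\sigma^{p^r}$, hence also $\sigma$ (a root of $T^{p^r}-\sigma^{p^r}$), is algebraic over $k((x_1,\dots,x_n))$.

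\emph{Necessity.} Assume $\sigma$ is algebraic over $K:=k((x_1,\dots,x_n))$. The first step is a reduction to a separable situation: the fields $K(\sigma^{p^r})$ form a descending chain of finite extensions of $K$, so there is an $r_0$ with $[K(\sigma^{p^{r_0+1}}):K]=[K(\sigma^{p^{r_0}}):K]$; since $K(\sigma^{p^{r_0+1}})\subseteq K(\sigma^{p^{r_0}})$, putting $\theta=\sigma^{p^{r_0}}$ gives $K(\theta)=K(\theta^p)$, whence $K(\theta)=K\cdot K(\theta)^p$ and $K(\theta)/K$ is separable. The coefficients of $\theta$ are the $\alpha_I^{p^{r_0}}$ and generate $kL^{p^{r_0}}$ over $k$, so everything reduces to the following claim: \emph{if $\theta\in\overline k[[x_1,\dots,x_n]]$ is separable algebraic over $K$, then the coefficients of $\theta$ lie in a finite extension of $k$.} (In characteristic $0$ this reduction is vacuous: $K(\sigma)/K$ is automatically separable, $r_0=0$, and the claim is the classical statement about algebraic power series over a number field or a $C_1$ field.)

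To prove the claim for $n=1$, set $E=k((x))(\theta)\subseteq\overline k((x))$ and $v=\operatorname{ord}_x$. As $K=k((x))$ is $v$-complete, $v$ extends uniquely to the finite extension $E$, and since $E\subseteq\overline k((x))$ its value group is $\ZZ$; thus $x$ is a uniformizer of the complete discrete valuation ring $\mathcal O_E=E\cap\overline k[[x]]$, whose residue field $\kappa$ satisfies $k\subseteq\kappa\subseteq\overline k$ and $[\kappa:k]\le[E:K]<\infty$. The target is $\mathcal O_E=\kappa[[x]]$ with $\kappa$ realized inside $\overline k$ as constants, for then $\theta\in\mathcal O_E$ has all coefficients in the finite extension $\kappa$ of $k$; and for this it is enough to show $\kappa\subseteq\mathcal O_E$, after which one extracts the coefficients of $\theta$ one at a time using completeness. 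For $c\in\kappa$ \emph{separable} over $k$ this is Hensel's lemma: a root of the minimal polynomial of $c$ lifting $c$ lies in $\mathcal O_E$, and being algebraic over $k$ and a power series over $\overline k$ it must equal the constant $c$. I expect the real obstacle to be ruling out that $\kappa/k$ is inseparable (equivalently, that $E/K$ is ramified even though $e=1$): this does \emph{not} follow from $E/K$ separable with $e=1$ alone, so one must use essentially that $\theta$ is an honest power series over $\overline k$ — e.g. by showing that an inseparable residual element would impose incompatible conditions on the coefficients of a generator of $E$ through its defining equation. For general $n$ one replaces $v$ by a rank-one monomial valuation of $K$ with $\QQ$-independent values $v(x_i)>0$ (residue field $k$, while $\overline k((x_1,\dots,x_n))$ has residue field $\overline k$), passes to the $v$-completion of $K$, and reruns the argument with $\ZZ$ replaced by $\sum\ZZ\,v(x_i)$; the new feature is that one must first control the now possibly nontrivial (but finite) ramification and residue degree via Abhyankar's inequality before invoking the structure theory.
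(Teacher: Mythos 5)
Your sufficiency argument is correct (and essentially equivalent to the paper's, which uses the Galois orbit of $\sigma^{p^r}$ after enlarging $r$ so that $kL^{p^r}/k$ is separable), and your reduction of the necessity direction to the case where $K(\theta)/K$ is separable, via stabilization of the chain $K(\sigma^{p^r})$, is a clean and valid alternative to the paper's route (which instead splits through the separable closure of $k$ in $L$ and treats the purely inseparable part by a separate Frobenius-twisted blowup construction in Theorem \ref{insep}). The problem is that you then stop exactly at the hard point. For $n=1$ your plan is: $e=1$, $f=[\kappa:k]<\infty$, embed $\kappa$ into $\mathcal O_E$ as constants from $\overline k$, conclude $\mathcal O_E=\kappa[[x]]$ with $\kappa\subset\overline k$, and read off the coefficients. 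You can only carry out the embedding when $\kappa/k$ is separable, and you explicitly concede that ruling out an inseparable residue extension ``does not follow from $E/K$ separable with $e=1$ alone'' and must come from $\theta$ being an honest power series, offering only a hope that an inseparable residual element ``would impose incompatible conditions.'' That is not an argument, and it is precisely where the content of the theorem lives: the phenomenon you need to exclude genuinely occurs one step away. For $\sigma=\sum t_i^{1/p}x^i$ as in Example \ref{example}, $E=K(\sigma)$ has $e=1$, $f=p$, $\kappa=k(t_1^{1/p})$, and $\hat{\mathcal O}_E\cong\kappa[[x]]$ by Cohen's theorem, yet no coefficient field of $\mathcal O_E$ is the field of constants $k(t_1^{1/p})\subset\overline k$ --- the coefficients of $\sigma$ escape every finite extension of $k$. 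So the dichotomy separable/inseparable at the residue level is the whole theorem, not a technicality, and you have not supplied the mechanism that forces $\kappa$ to sit inside $\overline k[[x]]$ as constants in the separable case. The paper's mechanism is embedded resolution of plane curve singularities: one follows the branch $y=\sigma(x)$ through successive quadratic transforms $R_i$, each of which enlarges the residue field by $\alpha_{i+1}$, and resolution forces the strict transform to become a smooth curve transverse to $x=0$ after finitely many steps, after which the residue fields $R_i/m_{R_i}=k(\alpha_1,\dots,\alpha_i)$ stabilize. Some such global input is needed.

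The several-variable case is also only gestured at. Replacing $\operatorname{ord}_x$ by a monomial valuation with $\QQ$-independent values destroys the two features your $n=1$ argument relies on: the value group is no longer discrete (so the completion is not a power series ring, defect becomes possible, and ``$e=1$'' no longer pins down $\mathcal O_E$), and the residue field of that valuation on $\overline k((x_1,\dots,x_n))$ is $\overline k$ but does not see the individual coefficients $\alpha_I$ in any finite stage, so ``peeling off coefficients one at a time using completeness'' has no direct analogue. The paper avoids all of this by Lemma \ref{separation}: algebraicity of $\sigma$ over $k((X))$ implies algebraicity of each one-variable slice $a_{I,l}$ over $k((x_l))$, and one then inducts on $n$ by viewing $\sigma$ as a series in $n-1$ variables over $K=k((x_n))$. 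I would recommend adopting that reduction rather than trying to repair the monomial-valuation approach.
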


In the case that $L$ is separable over $k$ (Corollary \ref{2}), or  that $k$ is a finitely
generated extension field of a perfect field (Corollary \ref{1}), we have a stronger condition. In these cases, $\sigma$ is algebraic
if and only if  $[L:k]<\infty$. The finiteness condition $[L:k]<\infty$ does not characterize algebraic series over arbitrary base fields $k$ of positive characteristic. To illustrate this, we give a simple example, in Example \ref{example}, of an algebraic series in one variable
for which $[L:k]=\infty$.

 In Section 2, we prove Theorem \ref{varfin}
in the case $n=1$. The most difficult part of the proof arises when $k$ is not perfect. Our proof uses the theorem of resolution of singularities of
a germ of a plane curve singularity over an arbitrary field (c.f. \cite{Ab1}, \cite{O}, \cite{C}). In Section 3, we prove Theorem \ref{varfin} for
any number of variables $n$. The proof  involves induction on the number of variables, and uses the result for one variable proven in Section 2.

In the case when $k$ has characteristic zero and $n=1$, the conclusions
of Theorem \ref{varfin} are classical. We recall the very strong known results,
under the assumption that $k$ has characteristic zero, and there is only one variable ($n=1$).
The algebraic
closure of the field of formal meromorphic power series $k((x))$ in the variable $x$  is
\begin{equation}\label{eqI1}
\overline{k((x))}=\cup_F\cup_{n=1}^{\infty}F((x^{\frac{1}{n}}))
\end{equation}
where $F$ is any finite field extension of $k$ contained in the algebraic
closure $\overline k$ of $k$.  The equality (\ref{eqI1}) is stated and proven  in Ribenboim and Van den Dries' article \cite{RVdD}.
A proof can also be deduced from Abhyankar's Theorem (\cite{Ab3} or Section 2.3 of \cite{GM}).
The equality (\ref{eqI1}) already follows for an algebraically closed field $k$ of characteristic zero
from a classical algorithm of Newton \cite{BK}, \cite{C}.

If $k$ has characteristic $p>0$, then the algebraic closure of
$k((x))$ is much more complicated, even when $k$ is algebraically closed,
because of the existence of Artin Schreier extensions, as  is shown in
Chevalley's book \cite{Ch}. In fact,  the series
\begin{equation}\label{eqC}
\sigma(x)=\sum_{i=1}^{\infty}x^{1-\frac{1}{p^i}},
\end{equation}
considered by Abhyankar in \cite{Ab4},
is algebraic over $k((x))$, as it satisfies the relation
$$
\sigma^p-x^{p-1}\sigma-x^{p-1}=0.
$$
When $k$ is an algebraically closed field of arbitrary characteristic, the
 ``generalized power series'' field $k((x^{\QQ}))$ is algebraically closed, as is shown by Ribenboim
in  \cite{R}. The approach of studying the algebraic closure of $k((x))$
through generalized power series is developed by Benhessi \cite{Be},
Hahn \cite{Ha}, Huang, \cite{Hu}, Poonen \cite{Po}, Rayner \cite{Ra}, Stefanescu \cite{St} and Vaidya \cite{V}. A complete solution when $k$ is a perfect field
is given by Kedlaya in \cite{Ke}. He shows that the algebraic closure of $k((x))$ consists of all ``twist recurrent series'' $u=\sum\alpha_ix^i$
in $\overline k((x^{\QQ}))$
such that all $\alpha_i$ lie in a common finite extension of $k$.

When $n>1$, the algebraic closure of $k((x_1,\ldots,x_n))$ is known to be extremely complicated, even when $k$ is algebraically closed of characteristic
0. In this case,  difficulties occur when the ramification locus of
a finite extension is very singular. There is a good understanding in some
important cases, such as when the ramification locus is a simple normal crossings divisor and the characteristic of $k$ is 0 or the ramification is
tame (Abhyankar \cite{Ab3}, Grothendieck and Murre \cite{GM}) and for quasi-ordinary singularities (Lipman \cite{L2}, Gonz\'alez-P\'erez \cite{G-P}).

More generally, subrings of a power series ring can be very
complex, and are a source of many extraordinary examples, such as
\cite{N}, \cite{Ro}, \cite{HRS}.

As an application of our methods, we give a characterization of valuation rings $V$ which dominate
an excellent, Noetherian local domain $R$ of dimension two, and such that  the rank increases after
passing to the completion of a birational extension of $R$. The characterization is   known
when  the residue field of $R$ is algebraically closed (Spivakovsky \cite{S}).
In this case ($R/m_R$ algebraically closed) the rank increases under completion if and only if $\mbox{dim}_R(V)=0$ ($V/m_V$ is
algebraic over $R/m_R$) and $V$ is discrete of rank 1.

However, the characterization is more subtle over non closed fields.
In Theorem \ref{TheoremV2}, we show that the condition that the rank  increases
under completion is characterized by the two conditions that the residue field of $V$ is finite  over the residue field of $R$, and that $V$ is  discrete of rank 1. The case when the residue field of $V$ is infinite algebraic over the residue field of  $R$ and the value group is discrete of rank 1 can occur,
and the rank of such a valuation does not increase when passing to completion.
In Corollary \ref{Cor}, we show that there exists a valuation ring $V$
dominating $R$ whose value group is discrete of rank 1 with $\mbox{dim}_R(V)=0$ such that
the rank of $V$ does not increase under completion if and only if the algebraic closure
of $R/m_R$ has infinite degree over $R/m_R$.

We point out the contrast of the  conclusions of
Theorem \ref{varfin} with the results of Section 4.  The finiteness condition
$[L:k]<\infty$ of the coefficient field of a series over a base field $k$
does not characterize algebraicity of a series in positive characteristic,
while the corresponding finiteness condition on residue field extensions does
characterize algebraicity (the increase of rank) in the case of valuations dominating a local ring of Theorem \ref{TheoremV2}. We illustrate this distinction in Example \ref{example2} by constructing the valuation ring determined by the series of
Example \ref{example}. We conclude by showing a simple standard power series representation of the valuation associated to the algebraic series of (\ref{eqC}), whose
exponents do not have bounded denominators.

The concept of the rank increasing when passing to the completion already
appears implicitely in Zariski's paper \cite{Z}. Some papers where the concept is developed
are \cite{S}, \cite{HS} and \cite{CG}.

If $R$ is a local (or quasi local) ring, we will denote its maximal ideal by $m_R$.

\section{Series in one variable}

\begin{Lemma}\label{Lemma1}
Suppose that $R$ is a 2 dimensional regular local ring, and $x\in m_R$ is part of a regular system of parameters.

Suppose that $k_0$ is a coefficient field of $\hat R$ and $y\in \hat R$ is such that $x,y$ are regular parameters in $\hat R$. This determinines an isomorphism
$$
\hat R\stackrel{\lambda_0}{\rightarrow} k_0[[x,y]]
$$
of $\hat R$ with a power series ring.
Suppose that $\alpha$ is separably algebraic over $k_0$. Let $y_1=\frac{y}{x}-\alpha$.
Then there exists a maximal ideal $n\subset R[\frac{m_R}{x}]$ and an isomorphism
$$
\widehat{R[\frac{m_R}{x}]_n}\stackrel{\lambda_1}{\rightarrow} k_0(\alpha)[[x,y_1]]
$$
which makes the diagram
$$
\begin{array}{ccc}
\hat R&\stackrel{\lambda_0}{\rightarrow}&k_0[[x,y]]\\
\downarrow&&\downarrow\\
\widehat{R[\frac{m_R}{x}]_n}&\stackrel{\lambda_1}{\rightarrow} &k_0(\alpha)[[x,y_1]]
\end{array}
$$
commute, where the vertical arrows are the natural maps.
\end{Lemma}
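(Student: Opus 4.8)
The plan is to analyze the blow-up map $R \to R[\frac{m_R}{x}]$ at the level of completions, using the coordinates provided by $\lambda_0$. Since $x, y$ are regular parameters in $\hat R$ and $x$ is part of a regular system of parameters in $R$, the ring $R[\frac{m_R}{x}]$ is covered by the affine chart $R[\frac{y}{x}]$ (together with the other generators of $m_R$, but since $R$ is two-dimensional and $x$ is a regular parameter, $m_R = (x,z)$ for some $z$, and the relevant chart near the point we want is $R[\frac{z}{x}]$; after identifying $\hat R$ with $k_0[[x,y]]$ we may as well take the parameter to be $y$). So I would first set $t = \frac{y}{x}$, observe that $R[\frac{m_R}{x}] = R[t]$ on this chart, and identify the maximal ideals $n$ lying over $m_R$: they correspond to monic irreducible polynomials in $(R/m_R)[t] = k[t]$ dividing the image of some element, but in fact the point we want is the one where $t \equiv \alpha$, i.e. $n = (x, y_1)$ where $y_1 = t - \alpha = \frac{y}{x} - \alpha$. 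Here one must be slightly careful: $\alpha$ lies in $k_0 \subseteq \hat R$, not necessarily in $R$, so $n$ is described as the ideal of $\widehat{R[t]}$ (or of a suitable localization) generated by $x$ and $y_1$; the residue field of $\widehat{R[\frac{m_R}{x}]_n}$ is then $k_0(\alpha)$ because $\alpha$ is separable over $k_0$, so $k_0(\alpha)/k_0$ is a finite separable field extension and $k_0(\alpha)[t]/(\text{min poly of }\alpha)$ localizes to a field.

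The key step is to produce the coefficient field $k_0(\alpha)$ inside $\widehat{R[\frac{m_R}{x}]_n}$ and the isomorphism $\lambda_1$. Because $\alpha$ is \emph{separably} algebraic over $k_0$, Hensel's lemma applies: the minimal polynomial $f(T) \in k_0[T]$ of $\alpha$ has a simple root $\alpha$ modulo $m$, so it lifts uniquely to a root in the complete local ring $\widehat{R[\frac{m_R}{x}]_n}$, giving a canonical embedding $k_0(\alpha) \hookrightarrow \widehat{R[\frac{m_R}{x}]_n}$ that is a coefficient field and that restricts to the inclusion of $k_0$. Then $x$ and $y_1$ are regular parameters of $\widehat{R[\frac{m_R}{x}]_n}$: indeed $y = x(t) = x(y_1 + \alpha) = xy_1 + \alpha x$, so $(x, y) = (x, y_1)$ as ideals, and since $x, y$ were a regular system of parameters upstairs and nothing has collapsed, $x, y_1$ form a regular system of parameters downstairs. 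The Cohen structure theorem then gives the isomorphism $\widehat{R[\frac{m_R}{x}]_n} \xrightarrow{\lambda_1} k_0(\alpha)[[x, y_1]]$ sending $x \mapsto x$, $y_1 \mapsto y_1$, and restricting to the inclusion $k_0(\alpha) \hookrightarrow k_0(\alpha)[[x,y_1]]$ on the coefficient field.

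Finally, commutativity of the diagram: the left vertical arrow is the completion of $R \to R[\frac{m_R}{x}]_n$, and under $\lambda_0$ and $\lambda_1$ it must be shown to agree with the natural map $k_0[[x,y]] \to k_0(\alpha)[[x,y_1]]$, $x \mapsto x$, $y \mapsto xy_1 + \alpha x$, $c \mapsto c$ for $c \in k_0$. This follows because both maps agree on $x$, on $y$ (by the relation $y = xy_1 + \alpha x$), and on the coefficient field $k_0$ (the embedding $k_0(\alpha) \hookrightarrow \widehat{R[\frac{m_R}{x}]_n}$ was chosen to extend the one coming from $k_0 \subseteq \hat R$), hence on the topologically dense subring $R[t]$, hence everywhere by continuity. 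The main obstacle is the bookkeeping around coefficient fields: one must check that the Hensel lift of $\alpha$ is compatible with the given coefficient field $k_0$ of $\hat R$ (so that $\lambda_0$ and $\lambda_1$ are genuinely compatible), and that the localization $R[\frac{m_R}{x}]_n$ really has the stated completion — this is where the hypothesis that $\alpha$ is separable, and that $x$ is part of a regular system of parameters in $R$ (not merely in $\hat R$), are both used.
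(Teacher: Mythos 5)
Your proposal is correct and follows essentially the same route as the paper's proof: pass to the chart $R[\frac{y}{x}]$, identify the maximal ideal $n$ as the one generated by $x$ and a lift of the minimal polynomial of $\alpha$ evaluated at $\frac{y}{x}$, and use separability together with Hensel's lemma to produce the coefficient field $k_0(\alpha)$ and the isomorphism $\lambda_1$. The one step you wave at that the paper carries out explicitly is the justification of ``we may as well take the parameter to be $y$'': since $y$ is only an element of $\hat R$, the paper first replaces it by $\tilde y\in R$ with $\tilde y\equiv y\pmod{m_R^3\hat R}$ and checks that $\frac{\tilde y}{x}-\alpha$ differs from $y_1$ by a series of order at least $2$, so that $k_0[[x,y]]=k_0[[x,\tilde y]]$ and $k_0(\alpha)[[x,y_1]]=k_0(\alpha)[[x,\frac{\tilde y}{x}-\alpha]]$.
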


\begin{proof} There exists $\tilde y\in R$ such that $\tilde y
=y+h$ where $h\in m_R^3\hat R$. We have
$$
\frac{\tilde y}{x}-\alpha=y_1+\frac{h(x,x(y_1+\alpha))}{x}
=y_1+h_1(x_1,y_1)
$$
where $h_1\in k_0(\alpha)[[x,y_1]]$ is a series of order $\ge 2$.
Thus we have natural change of variables
$k_0[[x,y]]=k_0[[x,\tilde y]]$ and $k_0(\alpha)[[x,y_1]]=k_0(\alpha)[[x,\frac{\tilde y}{x}-\alpha]]$.
We may thus assume that $y\in R$.

We have a natural inclusion induced by $\lambda_0$,
$$
R[\frac{y}{x}]\subset \hat R[\frac{y}{x}]\subset k_0(\alpha)[[x,y_1]].
$$
Let $n=(x,y_1)\cap R[\frac{y}{x}]$.

Let $h(t)$ be the minimal polynomial of $\alpha$ over $k_0$, and $f\in R[\frac{y}{x}]$ be a lift of
$$
h(\frac{y}{x})\in k_0[\frac{y}{x}]\cong R[\frac{y}{x}]/xR[\frac{y}{x}].
$$
Then $n=(x,f)$ and we see that $R[\frac{y}{x}]/n\cong k_0(\alpha)$.
Now the conclusions of the lemma follow from Hensel's Lemma (c.f. Lemma 3.5 \cite{C}).

\end{proof}

\begin{Theorem}\label{main} Suppose that $k$ is a  field, with algebraic closure
$\overline k$. Let $\overline k((x))$ be the field  of formal Laurent series
in a variable $x$  with coefficients in $\overline k$. Suppose that
$$
\sigma(x)=\sum_{i=d}^{\infty}\alpha_ix^i\in \overline k((x))
$$
where  $d\in \ZZ$ and $\alpha_i\in \overline k$ for all $i$. Let $L=k(\{\alpha_i\mid i\in \NN\})$,
and suppose that $L$ is separable over $k$.  Then $\sigma(x)$ is algebraic over $k((x))$ if and
only if
$$
[L:k]<\infty.
$$
\end{Theorem}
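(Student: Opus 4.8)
The plan is to deduce the theorem from Lemma \ref{Lemma1} together with resolution of singularities of a plane curve germ. The direction in which $[L:k]<\infty$ implies $\sigma$ is algebraic is immediate: then $\sigma\in L((x))$, and $L((x))$ is a finite extension (of degree $[L:k]$) of $k((x))$, so $\sigma$ is algebraic over $k((x))$. For the converse, assume $\sigma$ is algebraic over $k((x))$. First reduce to $\sigma\in\overline k[[x]]$: the tail $\sum_{i=d}^{-1}\alpha_i x^i$ has coefficients algebraic over $k$, hence is algebraic over $k((x))$, so it may be subtracted without affecting either hypothesis or conclusion. Next reduce to $\sigma(0)=0$: replacing $\sigma$ by $\sigma-\alpha_0$ changes $L$ only by the finite extension $k(\alpha_0)/k$ and preserves separability of $L/k$ and algebraicity of $\sigma$, since $\alpha_0\in\overline k$. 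So assume $\sigma=\sum_{i\ge 1}\alpha_i x^i\in\overline k[[x]]$.

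Choose a nonzero $f\in k[[x]][y]$ with $f(x,\sigma(x))=0$; we may take $f$ irreducible, and since $\sigma\ne 0$ we have $x\nmid f$, so by Weierstrass preparation in $k[[x,y]]$ and $\sigma(0)=0$, $f$ is, up to a unit, a Weierstrass polynomial in $y$, irreducible in $k[[x,y]]$. Thus $C=V(f)\subset\operatorname{Spec}k[[x,y]]$ is an irreducible plane curve germ over $k$, and $\sigma$ determines a formal branch of $C$, i.e.\ the $k[[x]]$-algebra map $k[[x,y]]/(f)\hookrightarrow\overline k[[x]]$, $y\mapsto\sigma$. Now apply resolution of singularities of the germ $C$ (\cite{Ab1}, \cite{O}, \cite{C}), following this branch: one obtains a finite sequence of point blow-ups
$$
R_0=k[[x,y]]\to R_1\to\cdots\to R_N ,
$$
where $R_{i+1}$ is the local ring at the point of the blow-up of $m_{R_i}$ lying on the strict transform of $C$, and such that the strict transform $C_N$ of $C$ in $\operatorname{Spec}R_N$ is regular. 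Setting $\sigma_0=\sigma$ and $\sigma_{i+1}=\sigma_i/x-\alpha_{i+1}$, so that $\sigma_i=\sum_{j\ge 1}\alpha_{i+j}x^j$, the branch is parametrized at the $i$-th stage by $(x,\sigma_i(x))$ with $\operatorname{ord}_x\sigma_i\ge 1$; hence it always lies in the chart $R_i[m_{R_i}/x]=R_i[y_i/x]$, and the tangent direction there is $\alpha_{i+1}=\lim_{x\to 0}\sigma_i(x)/x$. Because $L/k$ is separable, each $\alpha_{i+1}\in L$ is separably algebraic over $k$, hence over the current coefficient field $k_i:=k(\alpha_1,\dots,\alpha_i)$ — which is exactly the hypothesis needed to invoke Lemma \ref{Lemma1} at each stage. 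Applying it inductively with $\alpha=\alpha_{i+1}$ yields compatible isomorphisms $\widehat{R_i}\cong k_i[[x,y_i]]$, with $y_{i+1}=y_i/x-\alpha_{i+1}$ and $k_i=k(\alpha_1,\dots,\alpha_i)\supseteq k$, under which the branch becomes $y_i\mapsto\sigma_i$.

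It remains to identify the end of the resolution with the normalization. Since $k[[x,y]]/(f)$ is a one-dimensional complete local domain, its integral closure $B$ in $K:=k((x))(\sigma)$ is a complete DVR, finite over $k[[x]]$; as $C_N$ is regular, $R_N/(f_N)$ (with $f_N$ the strict transform of $f$) is a DVR lying between $k[[x,y]]/(f)$ and $K$, hence equals $B$. By the iterated Lemma \ref{Lemma1}, $B=\widehat{R_N}/(f_N)\cong k_N[[x]]$, compatibly with the branch embedding into $\overline k[[x]]$ and with the inclusion $k_N\hookrightarrow\overline k$; that is, the subring $B$ of $\overline k[[x]]$ is exactly the ring of power series with coefficients in $k_N=k(\alpha_1,\dots,\alpha_N)$. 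Since $\sigma\in B$, every $\alpha_i$ lies in $k_N$, so $L=k(\alpha_1,\dots,\alpha_N)$ is a finitely generated algebraic extension of $k$, i.e.\ $[L:k]<\infty$. The essential point — and the only place where separability is used — is the iteration of Lemma \ref{Lemma1}: it produces coefficient fields $k_i$ that \emph{contain} $k$, forcing the coefficients of $\sigma$ into a finite extension of $k$; in the inseparable case the blow-ups need not behave this way (indeed Example \ref{example} shows the conclusion then fails), which is why the general Theorem \ref{varfin} needs the full resolution analysis rather than just this bookkeeping. The main obstacles in making the argument rigorous are verifying that the resolution can be run so that every stage is in the situation of Lemma \ref{Lemma1} (the branch staying in the $x$-chart, $x$ remaining part of a regular system of parameters, and $\alpha_{i+1}$ separable over $k_i$), and the identification $R_N/(f_N)=B$.
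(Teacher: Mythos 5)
Your proposal is correct and follows essentially the same route as the paper's proof: the converse direction iterates Lemma \ref{Lemma1} along the branch (using that each $\alpha_{i+1}$ is separably algebraic over $k(\alpha_1,\dots,\alpha_i)$ because $L/k$ is separable) and then invokes embedded resolution of plane curve singularities to see that the process stabilizes after finitely many steps. The only differences are cosmetic: for the easy direction you use $[L((x)):k((x))]\le [L:k]$ in place of the paper's Galois-averaging of $\prod_{\tau\in G}(y-\tau(\sigma))$, and in the endgame you identify the final local ring with the normalization $k_N[[x]]$ of the branch, whereas the paper observes that $x,g_i$ become regular parameters once the total transform is a simple normal crossings divisor, so that the residue fields stabilize and $L=\bigcup_i R_i/m_{R_i}=k(\alpha_1,\dots,\alpha_{i_0})$.
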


\begin{proof} We reduce to the case where $d\ge 1$, by observing that
$\sigma$ is algebraic over $k((x))$ if and only if $x^{1-d}\sigma$ is.

First suppose that $[L:k]<\infty$. Let $M$ be a finite Galois extension of $k$ which contains $L$.
Let $G$ be the Galois group of $M$ over $k$. $G$ acts naturally by $k$ algebra isomorphisms on
$M[[x]]$, and the invariant ring of the action is $k[[x]]$. Let $f(y)=\prod_{\tau\in
G}(y-\tau(\sigma))\in M[[x]][y]$. Since $f$ is invariant under the action of $G$, $f(y)\in
k[[x]][y]$. Since $f(\sigma)=0$, we have that $\sigma$ is algebraic over $k((x))$.

Now suppose that $\sigma(x)=\sum_{i=1}^\infty\alpha_ix^i$ is algebraic over $k((x))$.  Then there
exists
$$
g(x,y)=a_0(x)y^n+a_1(x)y^{n-1}+\cdots +a_n(x)\in k[[x]][y]
$$
such that $a_0(x)\ne 0$, $n\ge 1$, $g$ is irreducible and $g(x,\sigma(x))=0$.

Let
$$
y_0=y, y_1=\frac{y}{x}-\alpha_1,y_2=\frac{y_1}{x}-\alpha_2,\ldots,
y_i=\frac{y_{i-1}}{x}-\alpha_{i},\cdots
$$
and define
$$
S_0=k[[x,y]], S_1=k(\alpha_1)[[x,y_1]], \cdots S_i=k(\alpha_1,\ldots,\alpha_i)[[x,y_i]],\ldots
$$
We have natural inclusions
$$
S_0\rightarrow S_1\rightarrow\cdots \rightarrow S_i\rightarrow \cdots
$$
By Lemma \ref{Lemma1}, there exists a sequence of inclusions
\begin{equation}\label{eq2}
R_0\rightarrow R_1\rightarrow\cdots\rightarrow R_i\rightarrow \cdots
\end{equation}
where $R_0=k[[x]][y]_{(x,y)}$ and each $R_i$ is a localization at a maximal ideal of the
blow up of the maximal ideal $m_{R_{i-1}}$ of $R_{i-1}$,
and we have a commutative diagram of homomorphisms
$$
\begin{array}{lllllllll}
S_0&\rightarrow &S_1&\rightarrow&\cdots&\rightarrow&S_i&\rightarrow&\cdots\\
\uparrow&&\uparrow&&&&\uparrow&&\\
R_0&\rightarrow &R_1&\rightarrow&\cdots&\rightarrow&R_i&\rightarrow&\cdots
\end{array}
$$
where the vertical arrows induce isomorphisms of the $m_{R_i}$-adic completions
$\hat R_i$ of $R_i$ with $S_i$.
We further have that $x$ is part of a
regular system of parameters in $R_i$ for all $i$, and $m_{R_{i-1}}R_i=xR_i$ for all $i$.

By our construction, we have that
\begin{equation}\label{eq3}
R_i/m_{R_i}\cong k(\alpha_1,\ldots,\alpha_{i})
\end{equation}
for all $i$.

For all $i$, write $g=x^{b_i}g_i$ where $g_i\in R_i$ and $x$ does not divide $g_i$ in $R_i$.

In $\overline k[[x,y_i]]$, we have a factorization
$$
y-\sigma=x^i(y_i-\sum_{j={i+1}}^{\infty}\alpha_jx^{j-i}).
$$
Since $y-\sigma$ divides $g$ in $\overline k[[x,y]]$, we have that
$y_i-\sum_{j={i+1}}^{\infty}\alpha_jx^{j-i}$ divides $g_i$ in
$\overline k[[x,y_i]]$. Thus $g_i$ is not a unit in $\overline k[[x,y_i]]$,
and is thus not a unit in $R_i$.

Let $C$ be the curve germ $g=0$ in the germ $\mbox{Spec}(R_0)$
of a nonsingular surface. The sequence (\ref{eq2}) is obtained by blowing up
the closed point in  $\mbox{Spec}(R_i)$, and localizing at a point
which is on the strict transform of $C$. $g_i=0$ is a local equation of
the strict transform of $C$ in $\mbox{Spec}(R_i)$. By embedded resolution
of plane curve singularities (\cite{Ab1}, \cite{O} or a simple generalization
of Theorem 3.15 and Exercise 3.13 of \cite{C}) we obtain that there exists $i_0$
such that the total transform of $C$ in $\mbox{Spec}(R_i)$ is a simple normal crossings divisor  for all $i\ge i_0$. Since $x^{b_i}g_i=g=0$ is a local
equation of the total transform of $C$ in $\mbox{Spec}(R_i)$, we have  that
$x,g_i$ are regular parameters in $R_i$ for all $i\ge i_0$.
Thus $g_{i_0}=x^{i-i_0}g_i$ for all $i\ge i_0$, and
$R_i=R_{i-1}[\frac{g_{i-1}}{x}]_{(x,g_i)}$ for all $i\ge i_0+1$.

We thus have that $R_i/m_{R_i}\cong R_{i_0}/m_{R_{i_0}}$ for all $i\ge i_0$,
and we see that
$$
L=\cup_{i\ge 0}R_i/m_{R_i}=R_{i_0}/m_{R_{i_0}} =k(\alpha_1,\ldots,\alpha_{i_0}).
$$
Thus $[L:k]<\infty$.

\end{proof}

\begin{Example}\label{example}  The conclusions of Theorem \ref{main} may fail if $L$ is not separable over $k$.
\end{Example}

\begin{proof} Let $p$ be a prime and $\{t_i\mid i\in\NN\}$ be algebraically independent over
the finite field $\ZZ_p$. Let $k=\ZZ_p(\{t_i\mid i\in\NN\})$. Define
$$
\sigma(x)=\sum_{i=1}^{\infty}t_i^{\frac{1}{p}}x^i\in \overline k[[x]].
$$
Let
$$
f(y)=y^p-\sum_{i=1}^{\infty}t_ix^{ip}\in k[[x]][y].
$$
$\sigma(x)$ is algebraic over $k[[x]]$ since
$$
f(\sigma(x))=(\sigma(x))^p-\sum_{i=1}^{\infty}t_ix^{ip}=0.
$$
However,
$$
[k(\{t_i^{\frac{1}{p}}\mid i\in\NN\}):k]=\infty.
$$
\end{proof}

Suppose that $k$ is a field of characteristic $p>0$ and $L$ is an extension
field of $k$. For $n\in \NN$, let
$$
L^{p^n}=\{f^{p^n}\mid f\in L\}.
$$
If $k$ has characteristic $p=0$, we take $L^{p^n}=L$ for all $n$.

\begin{Theorem}\label{insep} Suppose that $k$ is a field of characteristic $p>0$,
with algebraic closure
$\overline k$. Let $\overline k((x))$ be the field  of formal Laurent series
in the variable $x$ with coefficients in $\overline k$. Suppose that
$$
\sigma(x)=\sum_{i=d}^{\infty}\alpha_ix^i\in \overline k((x))
$$
where  $d\in \ZZ$ and $\alpha_i\in \overline k$ for all $i$.
Let $L= k(\{\alpha_i\mid i\in \NN\})$, and assume that $L$ is purely inseparable over $k$.           Then $\sigma(x)$ is algebraic over $k((x))$ if and only if
there exists $n\in\NN$ such that  $L^{p^n}\subset k$.

\end{Theorem}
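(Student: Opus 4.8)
The plan is to reduce to the separable case handled in Theorem \ref{main} by an appropriate Frobenius twist. First I would dispose of the easy direction: if $L^{p^n}\subset k$ for some $n$, then $\sigma^{p^n}=\sum_i\alpha_i^{p^n}x^{ip^n}$ has all coefficients $\alpha_i^{p^n}\in L^{p^n}\subset k$, so $\sigma^{p^n}\in k((x))$; hence $\sigma$ is algebraic over $k((x))$ (it satisfies $T^{p^n}-\sigma^{p^n}=0$). This direction needs no resolution of singularities, only the definition of $L^{p^n}$.

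For the converse, suppose $\sigma$ is algebraic over $k((x))$. The key observation is that for every $n$, the series $\sigma$ is algebraic over $k^{1/p^n}((x))$ as well, and more usefully, $\sigma^{p^n}$ is algebraic over $k((x))$ (it is a polynomial expression in the algebraic element $\sigma$). The coefficients of $\sigma^{p^n}$ generate the field $kL^{p^n}$ over $k$ (here I use that $(\sum\alpha_ix^i)^{p^n}=\sum\alpha_i^{p^n}x^{ip^n}$ in characteristic $p$, which kills all cross terms — this is the crucial simplification in one variable). So I would like to apply Theorem \ref{main} to $\sigma^{p^n}$, but Theorem \ref{main} requires the coefficient field to be \emph{separable} over $k$. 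Since $L$ is purely inseparable over $k$, so is $L^{p^n}$, hence $kL^{p^n}$ is purely inseparable over $k$ — the opposite extreme. The resolution is to choose $n$ large enough that $kL^{p^n}$ becomes separable over $k$, i.e. equal to $k$: I would argue that because $L$ is purely inseparable over $k$, each generator $\alpha_i$ satisfies $\alpha_i^{p^{m_i}}\in k$ for some $m_i$, but without a uniform bound this does not immediately give $L^{p^n}\subset k$. So instead I would run Theorem \ref{main} directly on $\sigma$ but over a different base: since $L/k$ is purely inseparable, $L$ is trivially separable over $k$ only when $L=k$, so that route fails. The correct approach is: apply the one-variable resolution-of-singularities argument of Theorem \ref{main} verbatim, but tracking that at each blow-up the residue field extension $R_i/m_{R_i} = k(\alpha_1,\dots,\alpha_i)$ is purely inseparable over $k$; the resolution argument shows $R_i/m_{R_i}$ stabilizes, so $L=k(\alpha_1,\dots,\alpha_{i_0})$ is a \emph{finite} purely inseparable extension of $k$, and a finite purely inseparable extension satisfies $L^{p^n}\subset k$ for $n$ large (take $p^n$ a common multiple exceeding all the exponents $[k(\alpha_j):k]$-related inseparability degrees).

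Concretely, the key steps are: (1) forward direction via $\sigma^{p^n}\in k((x))$; (2) for the converse, observe that the proof of Theorem \ref{main} only used separability of $L$ to invoke Lemma \ref{Lemma1}, which requires $\alpha$ separable over $k_0$ — so I would instead redo the blow-up sequence using that $\sigma$, being algebraic over $k((x))$, is also algebraic over $k^{1/p}((x))$, $k^{1/p^2}((x))$, etc.; over $k^{1/p^\infty} = \bigcup_n k^{1/p^n}$ the extension generated by the $\alpha_i$ becomes separable (indeed, every $\alpha_i \in \overline{k}$ with $\alpha_i$ purely inseparable over $k$ lies in $k^{1/p^n}$ for some $n$, hence is \emph{separable} — trivially, being in the base — over a suitable $k^{1/p^m}$); (3) but $\sigma$ may fail to be algebraic over any single $k^{1/p^n}((x))$ with the \emph{same} degree, so I would instead argue: $\sigma$ algebraic over $k((x))$ implies $\sigma \in \overline{k((x))}$, and I would apply Theorem \ref{main} to the pair $(k^{1/p^n}, \sigma)$ once $n$ is large enough that $L \subset k^{1/p^n}$ — wait, that requires $L^{p^n}\subset k$, which is what we want to prove. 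The genuine argument: apply Theorem \ref{main} with base field $k$ replaced by nothing, but re-examine its proof — the resolution of singularities argument (embedded resolution of the plane curve $g=0$) makes no use of separability at all; separability was only invoked via Lemma \ref{Lemma1}. So I would prove an inseparable analogue of Lemma \ref{Lemma1}, or observe that each blow-up $R_{i-1}\to R_i$ still has $R_i/m_{R_i} = R_{i-1}/m_{R_{i-1}}(\alpha_i)$ with $\alpha_i$ a root of a (possibly inseparable) polynomial, Hensel's lemma still applies since $R_{i-1}$ is complete after completion, and the chain of residue fields still stabilizes by resolution; hence $L/k$ is finite, and finite plus purely inseparable gives $L^{p^n}\subset k$ for some $n$.

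\textbf{Main obstacle.} The hard part will be step (2): Lemma \ref{Lemma1} as stated requires $\alpha$ separable over $k_0$, because the proof uses that $f = h(y/x)$ with $h$ the minimal polynomial has $R[y/x]/n \cong k_0(\alpha)$ \emph{and} that Hensel's lemma lifts the factorization — which needs the relevant root of $h$ to be simple, i.e. $h$ separable. In the purely inseparable case $h(t) = t^{p^e} - \alpha^{p^e}$ has a single root of multiplicity $p^e$, so naive Hensel fails. I expect to circumvent this by working with the blow-up sequence at the level of complete rings directly: after completion $\hat R_{i-1} \cong k(\alpha_1,\dots,\alpha_{i-1})[[x,y_{i-1}]]$, the blow-up point on the strict transform of $g=0$ is defined by an ideal whose residue field is $k(\alpha_1,\dots,\alpha_i)$ regardless of separability, simply because $\alpha_i$ is the coefficient appearing in the expansion of $\sigma$ and the strict transform passes through exactly that point; the residue-field identification $R_i/m_{R_i} = k(\alpha_1,\dots,\alpha_i)$ comes from the geometry (the point on the strict transform) rather than from Hensel. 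Granting this, the resolution-of-singularities finiteness argument goes through unchanged, yielding $[L:k] < \infty$, and then $L^{p^n}\subset k$ follows since a finite purely inseparable extension has bounded exponent.
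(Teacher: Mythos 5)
Your forward direction is exactly the paper's. The converse, however, contains a genuine error at the decisive step: you claim that the blow-up sequence yields residue fields $R_i/m_{R_i}=k(\alpha_1,\dots,\alpha_i)$ "from the geometry", that these stabilize by embedded resolution, and hence that $[L:k]<\infty$. That conclusion is false, and it is refuted by Example \ref{example} of this very paper: for $k=\ZZ_p(\{t_i\mid i\in\NN\})$ and $\sigma=\sum_{i\ge 1}t_i^{1/p}x^i$, the series is algebraic (it satisfies $y^p-\sum t_ix^{ip}=0$) while $[L:k]=[k(\{t_i^{1/p}\}):k]=\infty$; so no correct argument can extract $[L:k]<\infty$ from algebraicity. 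The residue-field identification is precisely where the inseparability bites. If $\lambda(1)$ is minimal with $\alpha_1^{p^{\lambda(1)}}\in k$, the closed point of the exceptional $\PP^1$ determined by $\sigma$ has local equation $h(y/x)$ with $h(t)=t^{p^{\lambda(1)}}-\alpha_1^{p^{\lambda(1)}}$ the (inseparable) minimal polynomial, so the new regular parameter is $y_1=(y/x)^{p^{\lambda(1)}}-\alpha_1^{p^{\lambda(1)}}$, not $y/x-\alpha_1$. Expanding, $y_1=\sum_{i\ge1}\alpha_{i+1}^{p^{\lambda(1)}}x^{ip^{\lambda(1)}}$: the series you must follow at the next stage has coefficients that are Frobenius twists of the $\alpha_j$, and the residue field picked up at stage $i$ is generated by $\alpha_i^{p^{\lambda(1)+\cdots+\lambda(i-1)}}$, not by $\alpha_i$. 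In the example above, already $y_1=\sum_{i\ge 1} t_{i+1}x^{ip}$ has all coefficients in $k$, so the tower of residue fields stabilizes at $k(t_1^{1/p})$ and never sees $t_2^{1/p},t_3^{1/p},\dots$ (compare Example \ref{example2}).

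The paper's proof does exactly the bookkeeping your sketch omits: it defines $\lambda(i)$ as the least exponent with $\alpha_i^{p^{\lambda(1)+\cdots+\lambda(i)}}$ in the previously constructed residue field, takes $y_i=\bigl(y_{i-1}/x^{p^{\lambda(1)+\cdots+\lambda(i-1)}}\bigr)^{p^{\lambda(i)}}-\alpha_i^{p^{\lambda(1)+\cdots+\lambda(i)}}$ as the new parameter, and uses resolution of the plane curve $g=0$ (together with an auxiliary sequence of rings $R_i\subset S_i$ with the same residue fields, needed because the $S_i$ involve the constants $\alpha_i$ which do not lie in the original ring) to show $\lambda(i)=0$ for $i\ge i_0+1$. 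What stabilizes is therefore the \emph{twisted} field $M=k(\alpha_1,\alpha_2^{p^{\lambda(1)}},\dots,\alpha_{i_0}^{p^{\lambda(1)+\cdots+\lambda(i_0-1)}})$, which is finite purely inseparable over $k$ and contains $L^{p^{\lambda(1)+\cdots+\lambda(i_0)}}$; then $M^{p^r}\subset k$ for some $r$ gives $L^{p^{\lambda(1)+\cdots+\lambda(i_0)+r}}\subset k$. This is the (strictly weaker) conclusion the theorem actually asserts, and it cannot be upgraded to $[L:k]<\infty$ as your proposal does.
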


\begin{proof} As in the proof of Theorem \ref{main}, we may assume that $d\ge 1$.

First suppose that  $L^{p^n}\subset k$ for some $n$. Then $\tau(x)=\sigma(x)^{p^n}\in k[[x]]$, and $\sigma(x)$ is the root of $y^{p^n}-\tau(x)=0$. Thus $\sigma$ is algebraic over $k((x))$.

Now suppose that
$\sigma(x)=\sum_{i=1}^\infty\alpha_ix^i\in \overline k[[x]]$ is algebraic over
$k((x))$.  Then there exists
$$
g(x,y)=a_0(x)y^n+a_1(x)y^{n-1}+\cdots +a_n(x)\in k[[x]][y]
$$
such that $a_0(x)\ne 0$, $n\ge 1$, $g$ is irreducible and $g(x,\sigma(x))=0$.

Let $K$ be the quotient field of $\overline k[[x]][y]$, and let $R_0:=S_0:=k[[x]][y]_{(x,y)}$. We
will first construct a series of subrings $S_i$ of $K$.

 Define a local $k$-algebra homomorphism $\pi_0:S_0\rightarrow \overline k[[x]]$
by prescribing that $\pi_0(x)=x$ and $\pi_0(y)=\sigma(x)$. The kernel of $\pi_0$ is the prime ideal $gS_0$.
$$
\frac{y}{x}=\sum_{i=0}^{\infty}\alpha_{i+1}x^i \in \overline k[[x]]
$$
defines a $k$-algebra homomorphism
$S_0[\frac{y}{x}]\rightarrow \overline k[[x]]$ which extends $\pi_0$. Let $\lambda(1)\in\NN$ be the
smallest natural number such that $\alpha_1^{p^{\lambda(1)}}\in k$. Then
the maximal ideal $x\overline k[[x]]$ of $\overline k[[x]]$ contracts to
$$
x\overline k[[x]]\cap S_0[\frac{y}{x}]=(x,\left(\frac{y}{x}\right)^{p^{\lambda(1)}}-\alpha_1^{p^{\lambda(1)}}).
$$
Set $y_1=\left(\frac{y}{x}\right)^{p^{\lambda(1)}}-\alpha_1^{p^{\lambda(1)}}$.
Let
$$
S_1=S_0[\frac{y}{x}]_{(x,y_1)}.
$$
Let $\pi_1:S_1\rightarrow \overline k[[x]]$ be the local $k$-algebra homomorphism
 induced by $\pi_0$.

We have that $x,y_1$ is a regular system of parameters in $S_1$, with
$$
y_1=\sum_{i=1}^{\infty}\alpha_{i+1}^{p^{\lambda(1)}}x^{ip^{\lambda(1)}}.
$$
$S_1/m_{S_1}\cong k(\alpha_1)$ and
$$
[S_1/m_{S_1}: S_0/m_{S_0}]=[k(\alpha_1):k]=p^{\lambda(1)}.
$$

Let $\lambda(2)\in\NN$ be the
smallest  natural number such that $\alpha_2^{p^{\lambda(1)+\lambda(2)}}\in k(\alpha_1)$.
Let
$$
y_2=\left(\frac{y_1}{x^{p^{\lambda(1)}}}\right)^{p^{\lambda(2)}}
-\alpha_2^{p^{\lambda(1)+\lambda(2)}}.
$$
Then there is an expansion in $\overline k[[x]]$
$$
y_2=\sum_{i=1}^{\infty}\alpha_{i+2}^{p^{\lambda(1)+\lambda(2)}}x^{ip^{\lambda(1)+\lambda(2)}}.
$$
Let $S_2=S_1[\frac{y_1}{x^{p^{\lambda(1)}}},\alpha_1]_{(x,y_2)}\subset K$. We have a  local $k$-algebra
homomorphism $\pi_2:S_2\rightarrow \overline k[[x]]$ which extends $\pi_1$.  We have
$S_2/m_{S_2}\cong k(\alpha_1,\alpha_2^{p^{\lambda(1)}})$, so that
$$
[S_2/m_{S_2}:S_1/m_{S_1}]=[ k(\alpha_1,\alpha_2^{p^{\lambda(1)}}):k(\alpha_1)]=p^{\lambda(2)}.
$$
We iterate the above construction, defining for $i\ge 2$,
$$
\begin{array}{lll}
y_i&=&\left(\frac{y_{i-1}}{x^{p^{\lambda(1)+\cdots+\lambda(i-1)}}}\right)^{p^{\lambda(i)}}-\alpha_i^{p^{\lambda(1)+\cdots+\lambda(i)}}\\
&=&\sum_{j=1}^{\infty}\alpha_{j+i}^{p^{\lambda(1)+\cdots+\lambda(i)}}
x^{jp^{\lambda(1)+\cdots+\lambda(i)}}
\end{array}
$$
where $p^{\lambda(i)}\in \NN$ is the smallest natural number   such that
$$
\alpha_i^{p^{\lambda(1)+\cdots+\lambda(i)}}\in k(\alpha_1,\alpha_2^{p^{\lambda(1)}},\ldots,\alpha_{i-1}^{p^{\lambda(1)+\cdots+\lambda(i-2)}}).
$$
Define
$$
S_i=S_{i-1}\left[\frac{y_{i-1}}{x^{p^{\lambda(1)+\cdots+\lambda(i-1)}}},
\alpha_{i-1}^{p^{\lambda(1)+\cdots+\lambda(i-2)}}\right]
_{(x,y_i)},
$$
to construct an infinite commutative diagram of regular local rings, which are contained in $K$,
$$
\begin{array}{lllllllll}
S_0&\rightarrow&S_1&\rightarrow&\cdots&\rightarrow&S_i&\rightarrow&\cdots\\
\pi_0\downarrow&&\pi_1\downarrow&&&&\pi_i\downarrow&&\\
\overline k[[x]]&=&\overline k[[x]]&=&\cdots&=&\overline k[[x]]&=&\cdots
\end{array}
$$
We have
\begin{equation}\label{eq30}
S_i/m_{S_i}\cong S_{i-1}/m_{S_{i-1}}[\alpha_i^{p^{\lambda(1)+\cdots+\lambda(i-1)}}]
\end{equation}
and
$$
[S_i/m_{S_i}:S_{i-1}/m_{S_{i-1}}]=p^{\lambda(i)}.
$$

For all $i$, the field
$$
k_i:=k(\alpha_1,\alpha_2^{p^{\lambda(1)}},
\ldots,\alpha_{i-1}^{p^{\lambda(1)+\cdots+\lambda(i-2)}})\subset S_i,
$$
and
$$
S_i/m_{S_i}\cong k_i[\alpha_i^{p^{\lambda(1)+\cdots+\lambda(i-1)}}].
$$
We now construct a sequence
$$
R_0\rightarrow R_1\rightarrow\cdots\rightarrow R_i\rightarrow\cdots
$$
of birationally equivalent regular local rings such that there is
a commutative diagram of local $k$-algebra homomorphisms
$$
\begin{array}{lllllllll}
R_0&\rightarrow &R_1&\rightarrow &\cdots&\rightarrow&R_i&\rightarrow&\cdots\\
\downarrow&&\downarrow&&&&\downarrow&&\\
S_0&\rightarrow &S_1&\rightarrow &\cdots&\rightarrow&S_i&\rightarrow&\cdots
\end{array}
$$
satisfying
$$
m_{R_i}S_i=m_{S_i}\text{ and }S_i/m_{S_i}\cong R_i/m_{R_i}
$$
for all $i$. The vertical arrows are inclusions.

This is certainly the case for $R_0=S_0$, so we suppose that we have constructed the sequence
out to $R_i\rightarrow S_i$, and show that we may extend it to
$R_{i+1}\rightarrow S_{i+1}$.

We have
$$
\alpha_{i+1}^{p^{\lambda(1)+\cdots+\lambda(i+1)}}\in
k(\alpha_1,\alpha_2^{p^{\lambda(1)}},\ldots,\alpha_i^{p^{\lambda(1)+\cdots+\lambda(i-1)}})\cong R_i/m_{R_i}.
$$
Thus there exists $\phi\in R_i$ such that the class  of $\phi$ in $R_i/m_{R_i}$ is
$$
[\phi]=\alpha_{i+1}^{p^{\lambda(1)+\cdots+\lambda(i+1)}}.
$$
Our assumptions $m_{R_i}S_i=m_{S_i}$ and $S_i/m_{S_i}\cong R_i/m_{R_i}$ imply that
\begin{equation}\label{eqz}
m_{R_i}^n/m_{R_i}^{n+1}\cong m_{S_i}^n/m_{S_i}^{n+1}
\end{equation}
as $R_i/m_{R_i}$ vector spaces for all $n\in \NN$.

By (\ref{eqz}), there exists $z_i\in R_i$ such that
$$
z_i=y_i+h
$$
with $h\in m_{S_i}^{2+p^{\lambda(1)+\cdots+\lambda(i)}}$. We then have that $m_{R_i}=(x,z_i)$, since $m_{R_i}/m_{R_i}^2\cong m_{S_i}/m_{S_i}^2$ as $R_i/m_{R_i}$ vector spaces, and by
Nakayama's Lemma. Now

$$
\begin{array}{lll}
\left(\frac{z_i}{x^{p^{\lambda(1)+\cdots+\lambda(i)}}}\right)^{p^{\lambda(i+1)}}
&=&\left(\frac{y_i}{x^{p^{\lambda(1)+\cdots+\lambda(i)}}}\right)^{p^{\lambda(i+1)}}
+\left(\frac{h}
{x^{p^{\lambda(1)+\cdots+\lambda(i)}}}\right)^{p^{\lambda(i+1)}}\\
&=&\left(\frac{y_i}{x^{p^{\lambda(1)+\cdots+\lambda(i)}}}\right)^{p^{\lambda(i+1)}}+xh'
\end{array}
$$
for some $h'\in S_i[\frac{y_i}{x^{p^{\lambda(1)+\cdots+\lambda(i)}}}]$.
$$
\left(\frac{z_i}{x^{p^{\lambda(1)+\cdots+\lambda(i)}}}\right)^{p^{\lambda(i+1)}}
-\phi\in
R_i\left[\frac{z_i}{x^{p^{\lambda(1)+\cdots+\lambda(i)}}}\right]\subset
S_i\left[\frac{y_i}{x^{p^{\lambda(1)+\cdots+\lambda(i)}}}\right]
$$

has residue
$$
\left(\frac{y_i}{x^{p^{\lambda(1)+\cdots+\lambda(i)}}}\right)^{p^{\lambda(i+1)}}-\alpha_{i+1}^{p^{\lambda(1)+\cdots+\lambda(i+1)}}
$$
in $S_{i+1}/xS_{i+1}\cong
S_i/m_{S_i}\left[\frac{y_i}{x^{p^{\lambda(1)+\cdots+\lambda(i)}}}\right]$. Thus

$$
m_{S_{i+1}}\cap
R_i\left[\frac{z_i}{x^{p^{\lambda(1)+\cdots+\lambda(i)}}}\right]
=(x,\left(\frac{z_i}{x^{p^{\lambda(1)+\cdots+\lambda(i)}}}\right)^{p^{\lambda(i+1)}}-\phi).
$$
Let
$$
R_{i+1}=R_i\left[\frac{z_i}{x^{p^{\lambda(1)+\cdots+\lambda(i)}}}\right]_{(x,\left(\frac{z_i}{x^{p^{\lambda(1)+\cdots+\lambda(i)}}}\right)^{p^{\lambda(i+1)}}-\phi)}.
$$

We have $m_{R_{i+1}}S_{i+1}=m_{S_{i+1}}$ (by Nakayama's Lemma) and $R_{i+1}/m_{R_{i+1}}\cong S_{i+1}/m_{S_{i+1}}$.

 We have  factorizations $g(x,y)=x^{\beta_i}g_i$ where $\beta_i\in\NN$ and
$g_i\in R_i$ is either irreducible or a unit. $g_i$ is a strict transform of $g$ in
$R_i$. Since $\pi_i(x)\ne 0$, we have that $g_i$ is contained in the kernel of the map
$R_i\rightarrow S_i\stackrel{\pi}{\rightarrow} \overline k[[x]]$,
 and thus the ideal $(g_i)$ is the (nontrivial) kernel of $R_i\rightarrow \overline k[[x]]$. In particular, $g_i\in M_{R_i}$ for all $i$.

Each  extension $R_i\rightarrow R_{i+1}$ can be factored as a sequence of $p^{\lambda(1)+\cdots+\lambda(i)}$ birationally equivalent regular local rings,
each of which is a quadratic transform (the blow up of the maximal ideal followed by localization). The $j$-th local ring with $j<p^{\lambda(1)+\cdots+\lambda(i)}$, has the maximal ideal $(x,\frac{z_i}{x^j})$.

By embedded resolution of plane curve singularities (\cite{Ab1}, \cite{C}, \cite{O}), we obtain that there exists $i_0$ such that $g=0$ is a simple normal crossings divisor in $\mbox{Spec}(R_i)$ for all $i\ge i_0$, so that
$x,g_i$ is  a regular system of parameters in $R_i$ for all $i\ge i_0$. Thus
$$
R_{i+1}=R_i\left[\frac{z_i}{x^{p^{\lambda(1)+\cdots+\lambda(i)}}}\right]_{(x,\frac{g_i}{x^{p^{\lambda(1)+\cdots+\lambda(i)}}})}
$$
for all $i\ge i_0$, and
$$
S_{i+1}/m_{S_{i+1}}\cong R_{i+1}/m_{R_{i+1}}\cong R_i/m_{R_i}\cong S_i/m_{S_i}
$$
for all $i\ge i_0$. Thus $\lambda(i)=0$
for all $i\ge i_0+1$.

Let $$
M=k(\alpha_1,\alpha_2^{p^{\lambda(1)}},\ldots,\alpha_{i_0}^{p^{\lambda(1)+\cdots+\lambda(i_0-1)}})\cong S_{i_0}/m_{S_{i_0}}.
$$
From (\ref{eq30}), we see that $L^{p^{\lambda(1)+\cdots+\lambda(i_0)}}\subset M$. Since $M$ is a finitely generated  purely inseparable extension of $k$, there exists $r\in\NN$ such that $M^{p^r}\subset k$. Thus
$L^{p^{\lambda(1)+\cdots+\lambda(i_0)+r}}\subset k$.

\end{proof}

\begin{Theorem}\label{fin} Suppose that $k$ is a field of characteristic $p\ge0$,
with algebraic closure
$\overline k$. Let $\overline k((x))$ be the field  of formal Laurent series with coefficients in $\overline k$. Suppose that
$$
\sigma(x)=\sum_{i=d}^{\infty}\alpha_ix^i\in \overline k((x))
$$
where  $d\in \ZZ$ and $\alpha_i\in \overline k$ for all $i$.
Let $L= k(\{\alpha_i\mid i\in \NN\})$.       Then $\sigma(x)$ is algebraic over $k((x))$ if and only if
there exists $n\in\NN$ such that  $[kL^{p^n}:k]<\infty$, where $kL^{p^n}$
is the compositum of $k$ and $L^{p^n}$ in $\overline k$.

\end{Theorem}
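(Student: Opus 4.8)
The plan is to deduce Theorem \ref{fin} from the two special cases already in hand — Theorem \ref{main} (separable coefficient field) and Theorem \ref{insep} (purely inseparable coefficient field) — by systematically applying Frobenius and, crucially, by choosing the right base field for the inseparable part. If $p=0$ the assertion is literally Theorem \ref{main} (every algebraic extension is separable and $L^{p^n}=L$), so assume $p>0$. As in the proofs of Theorems \ref{main} and \ref{insep}, replacing $\sigma$ by $x^{1-d}\sigma$ alters neither $L$ nor algebraicity over $k((x))$, so we may assume $d\ge 1$, i.e. $\sigma=\sum_{i\ge 1}\alpha_i x^i\in\overline k[[x]]$. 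I will repeatedly use two elementary facts: (i) since $\sigma^{p^n}=\sum_{i\ge1}\alpha_i^{p^n}x^{ip^n}$ and $k^{p^n}\subseteq k$, the field generated over $k$ by the coefficients of $\sigma^{p^n}$ is exactly $kL^{p^n}$; and (ii) if $F/k$ is separable algebraic then $kF^{p^n}=F$ for every $n$ (for $\beta\in F$, the element $\beta$ is simultaneously separable and purely inseparable over $k(\beta^{p^n})$, hence lies in it).

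For the ``if'' direction, suppose $[kL^{p^n}:k]<\infty$ and set $M=kL^{p^n}$. Since $M/k$ is a finite extension, the subfield $M_s$ of elements separable over $k$ is finite separable over $k$, and $M/M_s$ is finitely generated purely inseparable, so $M^{p^r}\subseteq M_s$ for some $r$. Then $L^{p^{n+r}}=(L^{p^n})^{p^r}\subseteq M^{p^r}\subseteq M_s$, so by fact (i) the coefficient field of $\sigma^{p^{n+r}}$ over $k$ is contained in the finite separable extension $M_s$ of $k$. By the ``if'' direction of Theorem \ref{main}, $\sigma^{p^{n+r}}$ is algebraic over $k((x))$; since $\sigma$ is a root of $Y^{p^{n+r}}-\sigma^{p^{n+r}}$ over the algebraic extension $k((x))(\sigma^{p^{n+r}})$ of $k((x))$, it follows that $\sigma$ is algebraic over $k((x))$.

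For the ``only if'' direction, suppose $\sigma$ is algebraic over $k((x))$ and let $L_s$ be the separable closure of $k$ in $L$, so $L/L_s$ is purely inseparable. View $\sigma\in\overline{L_s}((x))=\overline k((x))$ over the base field $L_s((x))$: it is still algebraic over $L_s((x))$, and its coefficient field over $L_s$ is $L_s(\{\alpha_i\})=L$, which is purely inseparable over $L_s$. Theorem \ref{insep} applied with base field $L_s$ then yields an $n$ with $L^{p^n}\subseteq L_s$. Now $\sigma^{p^n}$ is algebraic over $k((x))$ (a power of the algebraic element $\sigma$), and by fact (i) its coefficient field over $k$ is $kL^{p^n}\subseteq L_s$, which is separable over $k$; so the ``only if'' direction of Theorem \ref{main} gives $[kL^{p^n}:k]<\infty$, the desired conclusion. (In fact by (ii) one gets $L_s=kL_s^{p^n}\subseteq kL^{p^n}$, so $L_s/k$ is finite and $kL^{p^n}=L_s$.)

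The one genuine subtlety — the step I expect to be the main obstacle to pin down correctly — is the choice of $L_s((x))$ rather than $k((x))$ as the base field in the inseparable reduction. One cannot simply take a Frobenius power of $\sigma$ and apply Theorem \ref{insep} over $k$, because raising the coefficients of $\sigma$ to a $p$-th power need not make the coefficient field separable over $k$: the purely inseparable behaviour of $L/k$ can persist under $L\mapsto L^{p}$ whenever $k$ is imperfect in the relevant directions. Passing to $L_s$ as the ground field isolates exactly the purely inseparable part so that Theorem \ref{insep} applies, after which Theorem \ref{main} over $k$ disposes of what remains. The remaining points — that $L/L_s$ is purely inseparable, that the coefficient field of $\sigma^{p^n}$ is $kL^{p^n}$, and the passage from finiteness of $kL^{p^n}$ back to finiteness of $L_s/k$ — are routine.
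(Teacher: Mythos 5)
Your proof is correct and follows essentially the same route as the paper's: for the ``if'' direction you enlarge $n$ until $kL^{p^n}$ lands in a finite separable extension and invoke Theorem \ref{main}, and for the ``only if'' direction you pass to the separable closure of $k$ in $L$ as the base field, apply Theorem \ref{insep} there to get $L^{p^n}\subseteq L_s$, and finish with Theorem \ref{main} over $k$ applied to $\sigma^{p^n}$. The additional details you supply (the identification of the coefficient field of $\sigma^{p^n}$ with $kL^{p^n}$, and the explicit justification that increasing $n$ achieves separability) are exactly the points the paper leaves implicit.
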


\begin{proof}
First suppose that $[kL^{p^n}:k]<\infty$ for some $n$. After possibly replacing
$n$ with a larger value of $n$, we may assume that $kL^{p^n}$ is separable over $k$.
Then $\sigma(x)^{p^n}$
is algebraic over $k((x))$ by Theorem \ref{main}, and thus $\sigma(x)$ is algebraic over $k((x))$.

Now suppose that $\sigma(x)$ is algebraic over $k((x))$. Let $M$
be the separable closure of $k$ in $L$. Then $\sigma(x)$ is
algebraic over $M((x))$. Since $L$ is a purely inseparable
extension of $M$, it follows from Theorem \ref{insep} that
$\tau(x)=\sigma(x)^{p^n}\in M[[x]]$ for some $n\in\NN$. Since
$\tau(x)$ is algebraic over $k((x))$, we have that
$[kL^{p^n}:k]<\infty$ by Theorem \ref{main}.
\end{proof}

\begin{Corollary} Suppose that $k$ is a  field of characteristic $p\ge 0$ such that $k$ is a finitely generated extension of a perfect field, with algebraic closure
$\overline k$. Let $\overline k((x))$ be the field  of formal Laurent series with coefficients in $\overline k$. Suppose that
$$
\sigma(x)=\sum_{i=d}^{\infty}\alpha_ix^i\in \overline k((x))
$$
where  $d\in \ZZ$ and $\alpha_i\in \overline k$ for all $i$. Let $L=k(\{\alpha_i\mid i\in \NN\})$.  Then $\sigma(x)$ is algebraic over $k((x))$ if and
only if
$$
[L:k]<\infty.
$$
\end{Corollary}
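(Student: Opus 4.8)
The plan is to deduce this Corollary directly from Theorem~\ref{fin}, whose ``if and only if'' criterion is the existence of $n\in\NN$ with $[kL^{p^n}:k]<\infty$. So the only thing to prove is that, \emph{under the additional hypothesis that $k$ is a finitely generated extension of a perfect field}, the condition $[kL^{p^n}:k]<\infty$ for some $n$ is equivalent to $[L:k]<\infty$. One direction is trivial: if $[L:k]<\infty$ then in particular $[kL:k]=[L:k]<\infty$, so we may take $n=0$. The content is the reverse implication.

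So suppose $[kL^{p^n}:k]<\infty$ for some $n$; I must show $[L:k]<\infty$. First I would reduce to the case $k$ imperfect of characteristic $p>0$ (otherwise $L^{p^n}=L$ and there is nothing to prove). Write $k=k_0(u_1,\ldots,u_m)$ with $k_0$ perfect. The key structural fact is that such a $k$ has \emph{finite $p$-degree}: $[k:k^p]=p^m<\infty$. Indeed $k^p=k_0^p(u_1^p,\ldots,u_m^p)=k_0(u_1^p,\ldots,u_m^p)$ since $k_0$ is perfect, and a $p$-basis of $k/k^p$ is given by (a subset of) the monomials $u_1^{e_1}\cdots u_m^{e_m}$ with $0\le e_i<p$; hence $[k:k^p]\le p^m$. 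Iterating, $[k:k^{p^n}]\le p^{mn}<\infty$ for every $n$. Now I claim this forces $[k^{1/p^n}:k]<\infty$ as well: the Frobenius $x\mapsto x^{p^n}$ gives a field isomorphism $k^{1/p^n}\xrightarrow{\ \sim\ }k$ carrying the subfield $k$ onto $k^{p^n}$, so $[k^{1/p^n}:k]=[k:k^{p^n}]\le p^{mn}<\infty$.

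With that in hand the argument finishes quickly. Since $L\subset\overline{k}$ and $L$ is generated by the $\alpha_i$, and $L^{p^n}\subset kL^{p^n}$, every $\alpha_i$ satisfies $\alpha_i^{p^n}\in kL^{p^n}$, so $\alpha_i\in (kL^{p^n})^{1/p^n}\subset k^{1/p^n}\cdot (L^{p^n})^{1/p^n}$. More cleanly: $L\subseteq (kL^{p^n})^{1/p^n}$, because raising $L$ to the $p^n$ lands in $kL^{p^n}$. Therefore
$$
[L:k]\ \le\ [(kL^{p^n})^{1/p^n}:k].
$$
Now $(kL^{p^n})^{1/p^n}$ is obtained from $k^{1/p^n}$ by adjoining finitely many elements (the $p^n$-th roots of a set of generators of the finite extension $kL^{p^n}/k$), hence
$$
[(kL^{p^n})^{1/p^n}:k]\ \le\ [(kL^{p^n})^{1/p^n}:k^{1/p^n}]\cdot[k^{1/p^n}:k]
\ =\ [kL^{p^n}:k]\cdot[k^{1/p^n}:k],
$$
where the middle equality again uses that the Frobenius $x\mapsto x^{p^n}$ is an isomorphism $k^{1/p^n}\to k$ identifying $(kL^{p^n})^{1/p^n}$ with $kL^{p^n}$. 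Both factors on the right are finite, so $[L:k]<\infty$, as desired.

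The main obstacle—really the only nonroutine point—is establishing $[k:k^{p}]<\infty$ for $k$ finitely generated over a perfect field, and then leveraging the Frobenius isomorphism $k^{1/p^n}\cong k$ to transfer finiteness of $k/k^{p^n}$ into finiteness of $k^{1/p^n}/k$ and, simultaneously, of $(kL^{p^n})^{1/p^n}/k^{1/p^n}$ into that of $kL^{p^n}/k$. Everything else is bookkeeping with field degrees. I would also remark that this recovers Corollary~\ref{2} for separable $L$ as a special case (where one can take $n=0$ directly in Theorem~\ref{fin}), and that the imperfection-degree hypothesis is exactly what fails in Example~\ref{example}, where $k=\ZZ_p(\{t_i\})$ has $[k:k^p]=\infty$.
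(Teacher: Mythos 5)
Your proposal is correct and follows essentially the same route as the paper: both directions reduce to Theorem~\ref{fin}, and the substantive step is the containment $L\subseteq (kL^{p^n})^{1/p^n}$ together with the finiteness of $[k^{1/p^n}:k]$, which the paper establishes by adjoining $p^n$-th roots of explicit generators $s_1,\ldots,s_r,\beta_1,\ldots,\beta_s$ of $kL^{p^n}$ over the perfect field $F$, while you phrase the same computation via the finite imperfection degree $[k:k^p]\le p^m$ and the Frobenius isomorphism. The only nitpick is that $[k:k^p]$ need not equal $p^m$ exactly (only $\le p^m$), but finiteness is all you use.
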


\begin{proof}
If $[L:k]<\infty$, then $\sigma(x)$ is algebraic over $k((x))$ by Theorem
\ref{fin}.

Suppose that $\sigma(x)$ is algebraic over $k((x))$. By
assumption, there exists a perfect field $F$ and $s_1,\ldots,
s_r\in k$ such that $k=F(s_1,\ldots,s_r)$. By Theorem \ref{fin},
there exists $n$ such that $[kL^{p^n}:k]<\infty$. Thus
$kL^{p^n}=F(s_1,\ldots,s_r,\beta_1,\ldots,\beta_s)$ where
$\beta_1,\ldots,\beta_s\in kL^{p^n}$ are algebraic over $k$. Thus
$$
L\subset F(s_1^{\frac{1}{p^n}},\ldots,s_r^{\frac{1}{p^n}},\beta_1^{\frac{1}{p^n}},
\ldots,\beta_s^{\frac{1}{p^n}}).
$$
Now
$$
[F(s_1^{\frac{1}{p^n}},\ldots,s_r^{\frac{1}{p^n}}):F(s_1,\ldots,s_r)]<\infty
$$
and since $\beta_1,\ldots,\beta_s$ are algebraic over $F(s_1,\ldots,s_r)$,
$$
[F(s_1^{\frac{1}{p^n}},\ldots,s_r^{\frac{1}{p^n}},\beta_1^{\frac{1}{p^n}},
\ldots,\beta_s^{\frac{1}{p^n}}):
F(s_1^{\frac{1}{p^n}},\ldots,s_r^{\frac{1}{p^n}})]<\infty.
$$
Thus
$$
[L:k]\le [F(s_1^{\frac{1}{p^n}},\ldots,s_r^{\frac{1}{p^n}},\beta_1^{\frac{1}{p^n}},
\ldots,\beta_s^{\frac{1}{p^n}}):k]<\infty.
$$
\end{proof}

\section{Series in several variables}


We will now generalize theorem \ref{fin} to higher dimensions.

Denote by $X$ an $n$-dimensional indeterminate vector
$(x_1,x_2,\dots,x_n)$ and by $I$ an $n$-dimensional exponent vector
$(i_1,i_2,\dots,i_n)\in\mathbb{N}^n$. Then for $1\le l\le n$ write
$X_l=(x_1,x_2,\dots,x_l)$, $I_l=(i_1,i_2,\dots,i_l)$ and
$X^I_l=X_l^{I_l}=x_1^{i_1}x_2^{i_2}\cdots x_l^{i_l}$. If $E$ is a
field denote by $E[[X]]$ the formal power series ring in $n$
variables with coefficients in $E$ and by $E((X))$ the quotient
field of $E[[X]]$. Also denote by $E^c$ the perfect closure of $E$
and by $\overline{E}$ the algebraic closure of $E$.

\begin{Lemma}\label{separation}
Suppose that $E$ is a field and $F$ is a field extension of $E$. Let
$$
\s=\sum_{I\in\mathbb{N}^n}\a_IX^I\in F[[X]], {\text{ with }} \a_I\in
F,
$$
be a formal power series in $n$ variables with coefficients in $F$.
For any $1\le l\le n$ and $I\in\mathbb{N}^n$ define the following
power series in 1 variable with coefficients in $F$
$$
a_{I,l}=\sum_{j=0}^{\infty}\a_Jx_l^j, {\text{ where }}
J=(i_1,i_2,\dots,i_{l-1},j,i_{l+1},\dots,i_n).
$$
Then $\s$ is algebraic over $E((X))$ implies $a_{I,l}$ is algebraic
over $E((x_l))$.
\end{Lemma}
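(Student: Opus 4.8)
The plan is to build $a_{I,l}$ from $\s$ by repeatedly extracting, one variable at a time, the coefficient of a prescribed power of each variable $x_t$ with $t\ne l$, and to check at each step that such extraction carries a series algebraic over the Laurent field in the current set of variables to a series algebraic over the Laurent field in one fewer variable. Thus the whole argument rests on a single elementary statement about coefficient extraction in one variable, which I isolate first.

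\textbf{Core step.} Let $Z$ be a nonempty tuple of variables, $w$ one more variable, and suppose $\rho\in F[[Z,w]]$ is algebraic over $E((Z,w))$; then $\rho|_{w=0}\in F[[Z]]$ is algebraic over $E((Z))$. Since $E((Z,w))$ is the fraction field of $E[[Z,w]]$, clearing denominators in a polynomial relation for $\rho$ produces a \emph{nonzero} polynomial $\sum_j a_jY^j$ with all $a_j\in E[[Z,w]]$ and $\sum_j a_j\rho^j=0$ in $F[[Z,w]]$. Viewing $E[[Z,w]]=E[[Z]][[w]]$, let $v$ be the smallest $w$-order occurring among the nonzero $a_j$ and factor $a_j=w^v b_j$ with $b_j\in E[[Z,w]]$ and $w\nmid b_{j_0}$ for the index $j_0$ attaining that minimum. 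As $E[[Z,w]]$ is an integral domain we may cancel $w^v$ to get $\sum_j b_j\rho^j=0$, and now applying the ring homomorphism $w\mapsto 0$ gives $\sum_j (b_j|_{w=0})(\rho|_{w=0})^j=0$ in $F[[Z]]$, where $\sum_j(b_j|_{w=0})Y^j$ has coefficients in $E[[Z]]\subset E((Z))$ and is nonzero because its $Y^{j_0}$ coefficient $b_{j_0}|_{w=0}$ is nonzero. Hence $\rho|_{w=0}$ is algebraic over $E((Z))$. The only subtlety here, and the step I expect to be the main (minor) obstacle, is that one must cancel the common factor $w^v$ \emph{before} setting $w=0$, since otherwise the specialized relation could collapse to $0=0$.

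\textbf{From the core step to an arbitrary coefficient.} Keeping $\rho$ as above, write $\rho=\sum_{m\ge 0}\rho_m(Z)w^m$ with $\rho_m\in F[[Z]]$; I claim every $\rho_m$ is algebraic over $E((Z))$, and prove it by induction on $m$. The base case $m=0$ is the core step. For the inductive step, if $\rho_0,\dots,\rho_{m-1}$ are algebraic over $E((Z))$, hence over $E((Z,w))$, then $\rho-\sum_{t<m}\rho_t w^t$ is algebraic over $E((Z,w))$ and divisible by $w^m$ in $F[[Z,w]]$, so its quotient by $w^m$ is an element of $F[[Z,w]]$, still algebraic over $E((Z,w))$, whose value at $w=0$ is $\rho_m$; the core step then finishes the induction.

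\textbf{Conclusion.} Fix an ordering $j_1,\dots,j_{n-1}$ of $\{1,\dots,n\}\setminus\{l\}$, put $Z_0=(x_1,\dots,x_n)$, and let $Z_s$ be $Z_{s-1}$ with $x_{j_s}$ deleted, so that $Z_{n-1}=(x_l)$. Using $F[[Z_{s-1}]]=F[[Z_s]][[x_{j_s}]]$ and applying the previous paragraph $n-1$ times, starting from $\s\in F[[Z_0]]$ algebraic over $E((Z_0))=E((X))$, we obtain successively that the coefficient of $x_{j_1}^{i_{j_1}}$ in $\s$ is an element of $F[[Z_1]]$ algebraic over $E((Z_1))$, that the coefficient of $x_{j_2}^{i_{j_2}}$ in it is algebraic over $E((Z_2))$, and so on; after $n-1$ steps the coefficient of $\prod_{t\ne l}x_t^{i_t}$ in $\s$ is an element of $F[[x_l]]$ algebraic over $E((x_l))$. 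Finally, the coefficient of $x_l^j$ in this element is the coefficient of $x_l^j\prod_{t\ne l}x_t^{i_t}$ in $\s$, namely $\a_J$ with $J=(i_1,\dots,i_{l-1},j,i_{l+1},\dots,i_n)$, so this element is exactly $a_{I,l}=\sum_{j\ge 0}\a_J x_l^j$, which is therefore algebraic over $E((x_l))$, as desired.
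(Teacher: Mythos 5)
Your proof is correct and takes essentially the same route as the paper: your core step (clear denominators, cancel the minimal power of $w$ before setting $w=0$) is exactly the paper's construction of the $c'_j$, your induction on $m$ is the paper's recursion $\sigma_m=x_n^{-1}(\sigma_{m-1}-\delta_{m-1})$, and your iteration over the variables other than $x_l$ is the paper's induction on $n$. You also correctly identified the one real subtlety, namely that the common power of the specialized variable must be cancelled before specializing so the relation does not collapse to $0=0$.
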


\begin{proof}
We use induction on the number of variables. If $n=1$ the
statement is trivial. Suppose that $n>1$. After possibly permuting
the variables we may assume that $l=1$. Write
$X_{n-1}=(x_1,\dots,x_{n-1})$ and for all $m\in\mathbb{N}$
consider the power series in $n-1$ variables
$$
\d_m=\sum_{R\in\mathbb{N}^n,\,r_n=m}\a_RX^R_{n-1}=
\sum_{R\in\mathbb{N}^n,\,r_n=m}\a_Rx_1^{r_1}x_2^{r_2}\cdots
x_{n-1}^{r_{n-1}}.
$$
If $\d_{i_n}$ is algebraic over $E((X_{n-1}))$ it will follow from
the inductive hypothesis that $a_{I,1}$ is algebraic over
$E((x_1))$. We will show that $\d_m$ is algebraic over
$E((X_{n-1}))$ for all $m\in\mathbb{N}$.

Consider the algebraic dependency relation for $\s$ over $E((X))$
$$
c_t(X)\s^t+c_{t-1}(X)\s^{t-1}+\dots+c_1(X)\s+c_0(X)=0.
$$
By clearing the denominators we may assume that $c_j\in E[[X]]$ for
all $0\le j\le t$. Let $g$ be the highest power of $x_n$ that
divides $c_j$ for all $j$. Set
$c'_j=(x_n^{-g}c_j)(x_1,x_2,\dots,x_{n-1},0)$. Then $c'_j\in
E[[X_{n-1}]]$ and the following equation holds
$$
c'_t(X_{n-1})\d^t_0+c'_{t-1}(X_{n-1})\d^{t-1}_0+\dots+c'_1(X_{n-1})\d_0+c'_0(X_{n-1})=0,
$$
where $c'_j\neq 0$ for some $0\le j\le t$. Thus $\d_0$ is algebraic
over $E((X_{n-1}))$.

Set $\s_1=x_{n}^{-1}(\s-\d_0)$. Then $\s_1\in F[[X]]$ and it is
algebraic over $E((X))$. Arguing as above we get that $\d_1$ is
algebraic over $E((X_{n-1}))$. In general we define
$\s_m=x_n^{-1}(\s_{m-1}-\d_{m-1})$ recursively for all
$m\in\mathbb{N}$ and use $\s_m$ to prove that $\d_m$ is algebraic
over $E((X_{n-1}))$.
\end{proof}

\begin{Theorem}\label{multi}
Suppose that $k$ is a field of characteristic $p\ge 0$. Suppose
that
$$
\s=\sum_{I\in\mathbb{N}^n}\alpha_I X^I\in \overline k[[X]],
{\text{ with }} \a_I\in\overline{k}
$$
is a formal power series in $n$ variables with coefficients in
$\overline{k}$. Let $L=k(\{\alpha_I\mid I\in\mathbb{N}^n\})$ be
the extension field of $k$ generated by the coefficients of $\s$.
Then $\s$ is algebraic over $k((X))$ if and only if there exists
$r\in\mathbb{N}$ such that $[kL^{p^r}:k]<\infty$, where $kL^{p^r}$
is the compositum of $k$ and $L^{p^r}$ in $\overline k$.
\end{Theorem}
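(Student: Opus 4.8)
Recall that $L=k(\{\alpha_I\})$ is an algebraic extension of $k$, and write $K:=k((X))$. Suppose $[kL^{p^r}:k]<\infty$. A finite extension of $k$ becomes separable over $k$ after raising it to a sufficiently high power of the Frobenius, so after enlarging $r$ we may assume that $kL^{p^r}$ is a finite \emph{separable} extension of $k$. If $p>0$ then $\sigma^{p^r}=\sum_I\alpha_I^{p^r}X^{p^rI}$ has all of its coefficients in $L^{p^r}\subseteq kL^{p^r}$ (if $p=0$ take $r=0$). Put $\tau:=\sigma^{p^r}$, let $M\supseteq kL^{p^r}$ be a finite Galois extension of $k$ with group $G$, and note that $G$ acts on $M[[X]]$ through the coefficients, with invariant ring $k[[X]]$. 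Then $f(y):=\prod_{g\in G}(y-g(\tau))$ lies in $k[[X]][y]$, is monic of degree $|G|\ge 1$, and satisfies $f(\tau)=0$; hence $\tau$, and therefore $\sigma$, is algebraic over $K$. This is precisely the argument of Theorem \ref{main}.

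\textbf{Necessity.} We argue by induction on $n$, the case $n=1$ being Theorem \ref{fin}; assume $n>1$ and that the theorem holds in $n-1$ variables over an arbitrary base field. Put $k':=k((x_n))$. Since $k[[X]]=k[[x_n]][[X_{n-1}]]\subseteq k'[[X_{n-1}]]$, we have $K\subseteq k'((X_{n-1}))$, so $\sigma$ is algebraic over $k'((X_{n-1}))$. Write $\sigma=\sum_{J\in\mathbb{N}^{n-1}}b_J(x_n)X_{n-1}^{J}$ with $b_J(x_n)=\sum_m\alpha_{J,m}x_n^m\in\overline k[[x_n]]$. By Lemma \ref{separation} (applied with $l=n$), each $b_J$ is algebraic over $k((x_n))=k'$, so $b_J\in\overline{k'}$ and $\sigma\in\overline{k'}[[X_{n-1}]]$. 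Applying the inductive hypothesis to $\sigma$ over the base field $k'$, we obtain $r\in\mathbb{N}$ such that all of the coefficients $\{\,b_J(x_n)^{p^r}:J\in\mathbb{N}^{n-1}\,\}$ lie in a single finite extension $P$ of $k'=k((x_n))$.

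\textbf{Conclusion, and the main obstacle.} Each $b_J$ is a power series in the single variable $x_n$ whose coefficients $\{\alpha_{J,m}:m\in\mathbb{N}\}$ lie in $\overline k$, and $b_J$ is algebraic over $k((x_n))$; so Theorem \ref{fin} already gives, for each fixed $J$, a power of the Frobenius after which $b_J$ has all of its coefficients in a finite extension of $k$. What remains — and is the crux of the argument — is a \emph{uniform} version of this: there should exist $s\in\mathbb{N}$ and a finite extension $K_0$ of $k$, depending only on $P$, such that $\eta^{p^s}\in K_0[[x_n]]$ for every power series $\eta\in\overline k[[x_n]]\cap P$. Granting this, $\{\alpha_I^{p^{r+s}}:I\in\mathbb{N}^n\}=\{\alpha_{J,m}^{p^{r+s}}\}\subseteq K_0$, so $kL^{p^{r+s}}\subseteq K_0$ is finite over $k$ and the induction closes. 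I expect to prove the uniform statement by analysing the fixed field $P$: first splitting off the separable and purely inseparable parts of $P/k((x_n))$ (the latter being killed by a bounded power of the Frobenius), then examining the valuation ring of $P$ lying over $k[[x_n]]$ — a complete discrete valuation ring with residue field finite (but possibly inseparable) over $k$ — and finally rerunning, relative to $P$, the embedded resolution-of-singularities argument used in Theorems \ref{main} and \ref{insep}: there it served precisely to show that the residue fields occurring along a blow-up tower stabilize after finitely many blow-ups, and the degree and ramification of $P$ over $k((x_n))$ are what should bound this stabilization uniformly over all $\eta$. This passage from a per-series bound to a bound depending only on $P$ is the step I expect to be the main difficulty.
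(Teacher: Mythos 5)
Your sufficiency argument and the set-up of your necessity argument (induction on $n$, passing to the base field $k'=k((x_n))$, using Lemma \ref{separation} to see that each coefficient series $b_J$ is algebraic over $k'$, and invoking the inductive hypothesis over $k'$) coincide with the paper's proof. The problem is the final step. You correctly isolate the crux --- converting ``all $b_J^{p^r}$ lie in a finite extension $P$ of $k((x_n))$'' into ``all $\alpha_I^{p^s}$ lie in a finite extension of $k$'' --- but you do not prove it. What you propose instead is a \emph{uniform} statement about every power series $\eta\in\overline k[[x_n]]\cap P$, to be established by decomposing $P/k((x_n))$ into separable and purely inseparable parts and rerunning the embedded-resolution argument relative to $P$. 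That is left entirely as a plan, it is considerably harder than what is needed, and it is not what the paper does; as written, the proof has a genuine gap at exactly the point you flag as ``the main difficulty.''

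The paper closes the induction without any uniform statement about $P$, by exploiting the specific form of the finite extension produced by the inductive hypothesis. Since $k'(\{b_J^{p^r}\})$ is finite over $k'$, it equals $k'(b_{J_1}^{p^r},\dots,b_{J_N}^{p^r})$ for finitely many indices, and (the extension being algebraic) every $b_J^{p^r}$ is a \emph{polynomial} in $b_{J_1}^{p^r},\dots,b_{J_N}^{p^r}$ with coefficients in $k((x_n))$, i.e.\ with coefficients that are Laurent series over $k$. Expanding both sides of
$$
\sum_{m}\alpha_{J,m}^{p^r}x_n^{mp^r}\;=\;b_J^{p^r}\;=\;\sum_{S}\Bigl(\sum_{m\ge -M_S}\gamma_{S,m}x_n^m\Bigr)\bigl(b_{J_1}^{p^r}\bigr)^{s_1}\cdots\bigl(b_{J_N}^{p^r}\bigr)^{s_N},\qquad \gamma_{S,m}\in k,
$$
and comparing coefficients of powers of $x_n$ shows that every $\alpha_I^{p^r}$ is a polynomial over $k$ in the coefficients of the finitely many one-variable series $b_{J_1}^{p^r},\dots,b_{J_N}^{p^r}$; hence $kL^{p^{r'}}$ is generated over $k$ by the coefficients of $b_{J_1}^{p^{r'}},\dots,b_{J_N}^{p^{r'}}$ for every $r'\ge r$. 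Now apply the one-variable Theorem \ref{fin} to each of the $N$ series $b_{J_j}$ and take the maximum of the resulting exponents to get a single $R$ with $[kL^{p^R}:k]<\infty$. In short: the uniformity you were seeking over all of $P$ is unnecessary; finite generation of $P$ by the series themselves reduces everything to finitely many applications of the $n=1$ case. If you want to salvage your write-up, replace the last paragraph with this coefficient-comparison argument.
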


\begin{proof}
First suppose that there exists $r\in\mathbb{N}$ such that
$[kL^{p^r}:k]<\infty$. After possibly increasing $r$ we may assume
that $kL^{p^r}$ is a separable extension of $k$. Let $M$ be a
finite Galois extension of $k$ which contains $kL^{p^r}$. Notice
that $kL^{p^r}=k(\{\a_I^{p^r}\mid I\in\mathbb{N}^n\})$ and,
therefore $\s^{p^r}\in M[[X]]$. Let $G$ be the Galois group of $M$
over $k$. $G$ acts naturally by $k$ algebra isomorphisms on
$M[[X]]$, and the invariant ring of the action is $k[[X]]$. Let
$f(y)=\prod_{\tau\in G}(y-\tau(\s^{p^r}))\in M[[X]][y]$. Since $f$
is invariant under the action of $G$, $f(y)\in k[[X]][y]$. Since
$f(\s^{p^r})=0$, we have that $\s$ is algebraic over $k[[X]]$.

To prove the other implication we use induction on the number of
variables. When $n=1$ the statement follows from theorem
\ref{fin}. Assume that $n>1$.

For all $I\in\mathbb{N}^n$ let
$$
a_I=\sum_{j=0}^{\infty}\a_Jx_n^j, {\text { with }}
J=(i_1,i_2,\dots,i_{n-1},j),
$$
be a power series in 1 variable with coefficients in
$\overline{k}$. If $K=k((x_n))$ then by lemma \ref{separation}
$a_I$ is algebraic over $K$ for all $I\in\mathbb{N}^n$. Then
$$
\s=\sum_{\{I\in\mathbb{N}^n\,\mid\, i_n=0\}}a_IX^I_{n-1}
$$
is a series in $n-1$ variables with coefficients in
$\overline{K}$. By the inductive hypothesis there exists
$N\in\mathbb{N}$ and $r\in\mathbb{N}$ such that $K(\{a_I^{p^r}\mid
I\in\mathbb{N}^n\})=K(a^{p^r}_{I_1},a^{p^r}_{I_2},\dots,a^{p^r}_{I_N})$.
Thus, for all $I\in\mathbb{N}^n$ we have $a_I^{p^r}$ is a
polynomial in $a^{p^r}_{I_1},a^{p^r}_{I_2},\dots,a^{p^r}_{I_N}$
with coefficients in $K$.

Fix $I\in\mathbb{N}$, if $j\in\mathbb{N}$ set
$J=(i_1,i_2,\dots,i_{n-1},j)$ and write
$$
\sum_{j=0}^{\infty}\a_J^{p_r}x_n^{jp^r}=
a^{p^r}_I=\sum_{S\in\{0,1,\dots,T\}^N}(\sum_{m=-M_S}^{\infty}\g_{S,m}x_n^m)
(a_{I_1}^{p^r})^{s_1}(a_{I_2}^{p^r})^{s_2}\cdots
(a_{I_N}^{p^r})^{s_N},
$$
where $T\in\mathbb{N}$, $S=(s_1,s_2,\dots,s_N)$ is an index
vector, $M_S\in\mathbb{N}$ and $\g_{S,m}\in k$ for all $S$ and
$m$. This implies that for all $I\in\mathbb{N}$ and
$j\in\mathbb{N}$, $\a_J^{p^r}$ is a polynomial in the coefficients
of power series $a_{I_1}^{p^r},a_{I_2}^{p^r},\dots,a_{I_N}^{p^r}$
over $k$. Moreover, for all $r'\ge r$ we also have $\a_J^{p^{r'}}$
is a polynomial in the coefficients of power series
$a_{I_1}^{p^{r'}},a_{I_2}^{p^{r'}},\dots,a_{I_N}^{p^{r'}}$ over
$k$. Thus $kL^{p^{r'}}$ is the field extension of $k$ generated by
the coefficients of power series
$a_{I_1}^{p^{r'}},a_{I_2}^{p^{r'}},\dots,a_{I_N}^{p^{r'}}$.

Applying theorem \ref{fin} to each of the series
$a_{I_1},a_{I_2},\dots,a_{I_N}$ we see that there exists
$R\in\mathbb{N}$ such that $kL^{p^R}$ is finitely generated over
$k$.

\end{proof}

Similarly to the case of one variable we deduce the following
corollary

\begin{Corollary}\label{1}
Suppose that $k$ is a field of characteristic $p\ge 0$ such that
$k$ is a finitely generated extension of a perfect field. Suppose
that
$$
\s=\sum_{I\in\mathbb{N}^n}\alpha_I X^I\in \overline k[[X]],
{\text{ with }} \a_I\in\overline{k}
$$
is a formal power series in $n$ variables with coefficient in
$\overline{k}$. Let $L=k(\{\alpha_I\mid I\in\mathbb{N}^n\})$ be
the extension field of $k$ generated by the coefficients of $\s$.
Then $\s$ is algebraic over $k((X))$ if and only if
$[L:k]<\infty$.
\end{Corollary}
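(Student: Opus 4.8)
The plan is to derive this from Theorem \ref{multi} exactly as the one-variable Corollary was derived from Theorem \ref{fin}. One direction is immediate: if $[L:k]<\infty$ then, taking $r=0$, we have $[kL^{p^0}:k]=[L:k]<\infty$, so $\sigma$ is algebraic over $k((X))$ by Theorem \ref{multi}.

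For the converse, suppose $\sigma$ is algebraic over $k((X))$. First I would invoke Theorem \ref{multi} to obtain $r\in\mathbb{N}$ with $[kL^{p^r}:k]<\infty$. Using the hypothesis, write $k=F(s_1,\dots,s_m)$ with $F$ perfect; since $kL^{p^r}$ is a finite extension of $k$ it can be written as
$$
kL^{p^r}=k(\beta_1,\dots,\beta_t)=F(s_1,\dots,s_m,\beta_1,\dots,\beta_t)
$$
with $\beta_1,\dots,\beta_t$ algebraic over $k$. For each $I$ we have $\alpha_I^{p^r}\in L^{p^r}\subseteq kL^{p^r}$, so each $\alpha_I$ is a $p^r$-th root (taken in $\overline k$) of an element of $kL^{p^r}$; because $F$ is perfect, the field generated over $k$ by all such roots is precisely $F\bigl(s_1^{1/p^r},\dots,s_m^{1/p^r},\beta_1^{1/p^r},\dots,\beta_t^{1/p^r}\bigr)$, and hence
$$
L\subseteq F\bigl(s_1^{1/p^r},\dots,s_m^{1/p^r},\beta_1^{1/p^r},\dots,\beta_t^{1/p^r}\bigr).
$$

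Finally I would check that this last field is finite over $k$: the extension $F(s_1^{1/p^r},\dots,s_m^{1/p^r})$ of $k=F(s_1,\dots,s_m)$ is finite since each adjoined generator is a root of $y^{p^r}-s_i\in k[y]$, and then adjoining the finitely many elements $\beta_1^{1/p^r},\dots,\beta_t^{1/p^r}$---each algebraic over $k$ because the $\beta_j$ are---keeps the degree finite. Therefore $[L:k]<\infty$. I do not expect any real obstacle; the argument is essentially a transcription of the one-variable Corollary with Theorem \ref{fin} replaced by Theorem \ref{multi}, and the only step deserving attention is the use of perfectness of $F$ to guarantee that $F(s_1^{1/p^r},\dots,s_m^{1/p^r})$ is finite over $k$---exactly the point that breaks down over a general non-perfect base field, as Example \ref{example} illustrates.
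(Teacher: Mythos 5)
Your proposal is correct and follows exactly the route the paper intends: the paper gives no separate proof of Corollary \ref{1}, merely noting it is deduced ``similarly to the case of one variable,'' and your argument is precisely that one-variable proof (inclusion of $L$ into $F\bigl(s_1^{1/p^r},\dots,s_m^{1/p^r},\beta_1^{1/p^r},\dots,\beta_t^{1/p^r}\bigr)$ using perfectness of $F$, then the finite-degree count) with Theorem \ref{fin} replaced by Theorem \ref{multi}. No gaps.
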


Also notice that if $E$ is a field of characteristic $p\ge 0$ and
$a$ is separable algebraic over $E$ then for all $r\in\mathbb{N}$
we have $E[a^{p^r}]=E[a]$. Thus if $F$ is a separable extension of
$E$, $EF^{p^r}=F$ for all $r\in\mathbb{N}$. So we have the
following statement in case of separable extensions.

\begin{Corollary}\label{2}
Suppose that $k$ is a field of characteristic $p\ge 0$. Suppose
that
$$
\s=\sum_{I\in\mathbb{N}^n}\alpha_I X^I\in \overline k[[X]],
{\text{ with }} \a_I\in\overline{k}
$$
is a formal power series in $n$ variables with coefficient in
$\overline{k}$. Let $L=k(\{\alpha_I\mid I\in\mathbb{N}^n\})$ be
the extension field of $k$ generated by the coefficients of $\s$.
Suppose that $L$ is separable over $k$. Then $\s$ is algebraic
over $k((X))$ if and only if $[L:k]<\infty$.
\end{Corollary}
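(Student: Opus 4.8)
The plan is to obtain Corollary~\ref{2} as an immediate consequence of Theorem~\ref{multi} together with the elementary field-theoretic observation recorded just before the corollary: if $F$ is a separable algebraic extension of a field $E$ of characteristic $p\ge 0$, then $EF^{p^r}=F$ for every $r\in\NN$ (because $E[\alpha^{p^r}]=E[\alpha]$ whenever $\alpha$ is separably algebraic over $E$). I would begin by observing that since every coefficient $\alpha_I$ lies in $\overline k$, the field $L=k(\{\alpha_I\})$ is an \emph{algebraic} extension of $k$; hence the hypothesis ``$L$ separable over $k$'' means exactly that every element of $L$ is separably algebraic over $k$, so the cited observation applies with $E=k$, $F=L$ and gives $kL^{p^r}=L$ for all $r\in\NN$.

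For the implication $[L:k]<\infty \Rightarrow \sigma$ algebraic over $k((X))$, I would simply take $r=0$: then $kL^{p^0}=kL=L$, so $[kL^{p^0}:k]=[L:k]<\infty$, and Theorem~\ref{multi} yields directly that $\sigma$ is algebraic over $k((X))$. This direction uses no separability and is already subsumed by Theorem~\ref{multi}.

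For the converse, suppose $\sigma$ is algebraic over $k((X))$. By Theorem~\ref{multi} there is some $r\in\NN$ with $[kL^{p^r}:k]<\infty$; by the observation above $kL^{p^r}=L$, so $[L:k]=[kL^{p^r}:k]<\infty$. (In characteristic $0$ one has $L^{p^r}=L$ by the paper's convention, and the same reasoning applies.) I do not expect any real obstacle here: the substantive work is entirely in Theorem~\ref{multi}, and the only point requiring care is the harmless identification $kL^{p^r}=L$, for which one must first notice that $L/k$ is automatically algebraic so that ``separable'' means ``separable algebraic'' and the lemma $k[\alpha^{p^r}]=k[\alpha]$ applies verbatim.
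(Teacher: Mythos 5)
Your proposal is correct and is essentially the paper's own argument: the paper likewise derives Corollary \ref{2} from Theorem \ref{multi} via the observation, stated just before the corollary, that $kL^{p^r}=L$ for a separable (algebraic) extension $L/k$. The only addition you make — explicitly noting that $L\subset\overline k$ forces $L/k$ to be algebraic so that ``separable'' means ``separable algebraic'' — is a harmless clarification of what the paper leaves implicit.
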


\section{Valuations whose rank increases under completion}

Suppose that $K$ is a field and $V$ is a valuation ring of $K$. We will say that
{\it the rank of $V$ increases under completion} if there exists an analytically normal local domain $T$ with quotient field $K$ such that $V$ dominates $T$
and there exists an extension of $V$ to a valuation ring of the quotient field of $\hat T$
which dominates $\hat T$ which has higher rank than the rank of $V$.

Suppose that $V$ dominates an excellent local ring $R$ of dimension 2. Then
by resolution of surface singularities \cite{L}, there exists a regular local ring $R_0$ and a birational extension
$R\rightarrow R_0$ such that $V$ dominates $R_0$. Let
\begin{equation}\label{eq1}
R_0\rightarrow R_1\rightarrow\cdots\rightarrow R_n\rightarrow\cdots
\end{equation}
be the infinite sequence of regular local rings obtained by blowing up
the maximal ideal of $R_i$ and localizing at the center of $V$.
Since $R$ has dimension 2, we have that $V=\cup_{i=0}^{\infty}R_i$ (as is shown in \cite{Ab2}), and
thus $V/m_V=\cup_{i=0}^{\infty}R_i/m_{R_i}$. We see that $V/m_V$ is countably generated over $R/m_R$.

Suppose that the rank of $V$ increases under completion. Then there exists $n$
such that for all $i\ge n$, there exists a valuation ring $V_1$ of the quotient field of the regular
local ring $\hat R_i$ which extends $V$, dominates $\hat R_i$, and has rank larger than 1. By the Abhyankar inequality (\cite{Ab2} or Proposition 3 of Appendix 2 \cite{ZS}),
we have that $R_i$ has dimension 2, $V_1$ is discrete of rank 2, and
$V_1/m_{V_1}$ is algebraic over $\hat R_i/m_{\hat R_i}$. Thus $V/m_V$ is algebraic over $R/m_R$ and $V$ is discrete of rank 1.

  It was shown by Spivakovsky \cite{S} in the case that $R/m$ is algebraically closed that the converse holds, giving
the following simple characterization.

\begin{Theorem}\label{TheoremV1}(Spivakovsky \cite{S})
Suppose that $V$ dominates an excellent two dimensional  local ring $R$ who residue field
$R/m_R$ is algebraically closed. Then the rank of $V$ increases under completion
if and only if $\mbox{dim}_R(V)=0$ and $V$ is discrete of rank 1.
\end{Theorem}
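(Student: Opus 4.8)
The plan is to treat the two implications separately. The ``only if'' direction has essentially been carried out in the discussion preceding the statement: if $V_1$ is a valuation of $\operatorname{Quot}(\hat R_i)$ dominating $\hat R_i$, extending $V$, and of rank $>1$, then the Abhyankar inequality forces $\dim R_i=2$, $V_1$ discrete of rank $2$, and $V_1/m_{V_1}$ algebraic over $\hat R_i/m_{\hat R_i}$, whence $\dim_R(V)=0$ and $V$ is discrete of rank $1$. So the content is the converse.

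Assume $\dim_R(V)=0$ and $V$ discrete of rank $1$; let $\nu$ be a valuation of $V$ normalized so that its value group is $\ZZ$. Since $R/m_R=k$ is algebraically closed and $V/m_V$ is algebraic over $k$, we have $V/m_V=k$. Replacing $R$ by a regular $R_0$ dominated by $V$ and forming the sequence \eqref{eq1} of quadratic transforms along $V$, we get $V=\cup_i R_i$ with each $R_i$ a two-dimensional regular local ring, $R_i/m_{R_i}=k$, and $\hat R_i\cong k[[x_i,y_i]]$ (working in the equicharacteristic case, so that $k\subset R_i$). The integers $\nu(m_{R_i})$ are positive and non-increasing, hence eventually equal to some $d\ge 1$; fix $i$ large, let $x:=x_i$ be an exceptional regular parameter with $\nu(x)=d=\nu(m_{R_i})$, and let $y:=y_i$ be a companion parameter. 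Peeling off leading terms along the blow-up sequence — at each step the $\nu$-value of the new parameter must be divisible by $d$, since otherwise a later maximal-ideal value would drop below $d$, and the unit residues land in $V/m_V=k$, which is algebraically closed — produces a power series $\phi=\sum_{\ell\ge1}a_\ell x^{\ell}\in xk[[x]]$, together with a compatible system of local homomorphisms $\rho_j\colon R_j\to k[[x]]$, $j\ge i$, with $\rho_i(x)=x$, $\rho_i(y)=\phi$, and $\nu(h)=d\cdot\operatorname{ord}_x\rho_i(h)$ for all $h\in R_i$. Passing to the union, $\rho_\infty\colon V=\cup_j R_j\to k[[x]]$ is a local homomorphism with $\rho_\infty(x)=x\ne 0$, so its kernel, a prime of the rank-one valuation ring $V$, is $(0)$. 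In particular $P:=(y-\phi(x))\hat R_i$ is a height-one prime of $\hat R_i$ with $P\cap R_i=(0)$, and $\hat R_i/P\cong k[[x]]$, with quotient field $k((x))$.

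Let $V_1$ be the rank-two composite valuation of $\operatorname{Quot}(\hat R_i)$ obtained from $W:=\operatorname{ord}_P$, whose residue field is $k((x))$, and the $x$-adic valuation of $k((x))$. Then $V_1$ dominates $\hat R_i$: if $h\in\hat R_i\setminus P$ then $\bar h:=h(x,\phi(x))\in k[[x]]$ satisfies $\bar h(0)=h(0,0)$, so $\operatorname{ord}_x\bar h>0$ precisely when $h\in m_{\hat R_i}$, and since $P\subset m_{\hat R_i}$ this gives $m_{V_1}\cap\hat R_i=m_{\hat R_i}$. Moreover $V_1\cap K=V$, where $K=\operatorname{Quot}(R_i)=\operatorname{Quot}(R)$: for $0\ne h\in R_i$ we have $W(h)=0$ (as $P\cap R_i=(0)$) and $\operatorname{ord}_x\bar h=\operatorname{ord}_x\rho_i(h)=\nu(h)/d$, so $V_1$ restricts on $K$ to the rank-one valuation $\nu/d$, whose valuation ring is $V$. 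Since $\hat R_i$ is regular, hence normal, the ring $T:=R_i$ is analytically normal with quotient field $K$, $V$ dominates $T$, and $V_1$ is a valuation of $\operatorname{Quot}(\hat T)$ dominating $\hat T=\hat R_i$ with rank $2$, which exceeds the rank $1$ of $V$. Thus the rank of $V$ increases under completion, completing the proof.

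The step I expect to be the main obstacle is the normal-form construction in the second paragraph: producing $\phi$ and, more delicately, verifying $\nu(h)=d\cdot\operatorname{ord}_x\rho_i(h)$ for \emph{every} $h\in R_i$, not merely for elements built from $x$ and $y$. The cleanest route is to set $\mu(h):=d\cdot\operatorname{ord}_x\rho_i(h)$, check that $\mu$ is a valuation of $K$ with the same center as $\nu$ on each $R_j$, so that $\mu$ dominates $V=\cup_j R_j$, and then invoke that a rank-one valuation ring of $K$ has no proper coarsening other than $K$ to conclude $\mu=\nu$. Establishing this compatibility of centers, together with the termination of the term-peeling — equivalently, that the formal branch of a discrete rank-one valuation $V$ with $V/m_V=k$ resolves, in the spirit of the plane-curve arguments of Section 2 — is where the real work lies; the construction of $V_1$ and the two verifications above are then routine.
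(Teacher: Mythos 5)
First, note that the paper does not prove this statement directly: it is quoted from Spivakovsky \cite{S}, and in the paper it is recovered as the special case $R/m_R$ algebraically closed of Theorem \ref{TheoremV2} (since then $\mbox{dim}_R(V)=0$ is equivalent to $V/m_V=R/m_R$, hence to finiteness of $V/m_V$ over $R/m_R$). So the relevant comparison is with the proof of Theorem \ref{TheoremV2}. Your ``only if'' direction coincides with the paper's (the Abhyankar inequality argument given just before the theorem). For the converse your overall strategy --- pass to the quadratic sequence $R_0\rightarrow R_1\rightarrow\cdots$ along $V$, peel off leading terms to build a formal series, and exhibit a rank~$2$ extension over the completion --- is the same as the paper's, but you take a substantially heavier route: you insist on producing a homomorphism $\rho_i:R_i\rightarrow k[[x]]$ realizing $\nu$ exactly, i.e.\ $\nu(h)=d\cdot\operatorname{ord}_x\rho_i(h)$ for \emph{all} $h\in R_i$ together with $P\cap R_i=(0)$, and then build the composite valuation $V_1$ by hand. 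The paper needs none of this: it only constructs the Cauchy sequence $y_{i+1}=y_i-\alpha_ix^{n_i}$ with $\nu(y_i)=n_i$ strictly increasing, takes the limit $\sigma\in\hat R$, and observes that \emph{any} extension $\hat\nu$ of $\nu$ dominating $\hat R$ satisfies $\hat\nu(\sigma)\ge\min(\hat\nu(y_i),\hat\nu(\sigma-y_i))\ge n_i$ for every $i$, forcing rank $\ge 2$. One element of unbounded value suffices; no identification of the extension, no exactness of $\rho_i$, no composite-valuation bookkeeping.

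Two points in your write-up are genuine gaps rather than routine omissions. First, the step you yourself flag --- establishing $\nu(h)=d\cdot\operatorname{ord}_x\rho_i(h)$ for every $h$, the compatibility of the $\rho_j$, and $P\cap R_i=(0)$ --- is the entire content of the converse in your formulation, and it is only sketched; your proposed repair (show $\mu:=d\cdot\operatorname{ord}_x\circ\rho_i$ is a valuation dominating every $R_j$ and use that a rank-one valuation ring admits no proper coarsening except $K$) is circular as stated, since $\mu$ being a valuation of $K$ already presupposes $\ker\rho_i=(0)$, which you derive from $\rho_\infty$ being defined on all of $V$, i.e.\ from the compatibility you are trying to prove. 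This can be fixed, but it is real work, and the paper's argument is designed precisely to avoid it. Second, your parenthetical ``working in the equicharacteristic case, so that $k\subset R_i$'' is an unjustified restriction: the theorem allows mixed characteristic (e.g.\ $R$ a regular local ring of mixed characteristic with algebraically closed residue field), where $\hat R_i$ is \emph{not} $k[[x,y]]$ and there is no homomorphism onto $k[[x]]$. The paper's construction sidesteps this as well, since the $\alpha_i$ are merely lifts to $R$ of residues in $R/m_R=V/m_V$ and no coefficient field is ever used.
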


The condition that the transcendence degree $\mbox{dim}_R(V)$ of $V/m_V$ over
$R/m_R$ is zero is just the statement that $V/m_V$ is algebraic over $R/m_R$.
In the case that $R/m_R$ is algebraically closed, $\mbox{dim}_R(V)=0$
if and only if $V/m_V=R/m_R$.

Using a similar method to that used in the proof of our algebraicity theorem on power series, Theorem \ref{main}, we prove the
following extension of Theorem \ref{TheoremV1}.

\begin{Theorem}\label{TheoremV2}
Suppose that $V$ is a valuation ring of a field $K$, and $V$ dominates an excellent two dimensional  local domain $R$ whose quotient field is $K$. Then the rank of $V$ increases under completion
if and only if $V/m_V$ is finite over $R/m_R$ and $V$ is discrete of rank 1.
\end{Theorem}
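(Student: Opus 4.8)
The plan is to reduce to a sequence of blowups of a regular local ring dominated by $V$, exactly as in the discussion preceding the theorem, and then to analyze the residue field tower $R_i/m_{R_i}$ using the techniques of Section 2. Replacing $R$ by a suitable birational extension, we may assume $R$ is a two-dimensional regular local ring dominated by $V$; let $(\ref{eq1})$ be the infinite sequence of quadratic transforms along $V$, so that $V=\cup R_i$ and $V/m_V=\cup R_i/m_{R_i}$. One direction is already contained in the excerpt: if the rank increases under completion, the Abhyankar inequality forces $V$ to be discrete of rank $1$ and $V/m_V$ algebraic over $R/m_R$; I must upgrade "algebraic" to "finite", and conversely show that "$V/m_V$ finite over $R/m_R$ and $V$ discrete of rank $1$" forces the rank to increase.

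For the forward direction, assume the rank increases; then for $i\gg 0$ there is a rank-$2$ discrete valuation $V_1$ of $\operatorname{Frac}(\hat R_i)$ dominating $\hat R_i$ and extending $V$, with $V_1/m_{V_1}$ algebraic over $\hat R_i/m_{\hat R_i}=R_i/m_{R_i}$. The key point is that, since $V$ is already discrete of rank $1$ and $V=\cup R_i$, the sequence $(\ref{eq1})$ must eventually consist of the blowups occurring along a single analytic branch: concretely, for $i$ large the ideal $m_{R_{i-1}}R_i$ is principal, generated by a regular parameter $x$, and the other regular parameter of $R_i$ can be taken to be a coordinate $y_i$ with $y_i = \frac{y_{i-1}}{x}-\alpha_i$ for suitable $\alpha_i\in\overline{R_i/m_{R_i}}$ lying in $R_{i+1}/m_{R_{i+1}}$ — this is precisely the setup of Lemma \ref{Lemma1} and of the proof of Theorem \ref{main}. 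The existence of the rank-$2$ extension $V_1$ corresponds to the datum of a formal arc $y=\sigma(x)=\sum\alpha_i x^i$ in $\hat R_i$ along which $V_1$ has a "second-order" behavior; the fact that this arc lies in $\widehat{R_i}$ (equivalently, that the completion has a height-one prime giving the rank jump) makes $\sigma$ algebraic over the fraction field of $R_i$, hence over $k((x))$ with $k=R_i/m_{R_i}$. Now $V/m_V = \cup_j (R_i)_j/m = k(\{\alpha_i\})$, and finiteness of this extension follows from Theorem \ref{main} when the coefficient field is separable, and in general from Theorem \ref{fin} — but here we get finiteness outright, not just finiteness of a $p^r$-th power compositum, because the residue fields $R_i/m_{R_i}$ are obtained from $k$ by adjoining the $\alpha_i$ themselves (this is the mechanism already used in the proof of Theorem \ref{main}: the resolution of the plane curve branch stabilizes the residue field after finitely many steps).

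For the converse, assume $V/m_V$ is finite over $R/m_R$ and $V$ is discrete of rank $1$. I would run the sequence $(\ref{eq1})$; finiteness of $V/m_V$ over $R/m_R$ means the tower $R_i/m_{R_i}$ stabilizes, say $R_i/m_{R_i}=:k$ for $i\ge i_0$, and rank-$1$ discreteness of $V$ means that from some point on the sequence is the chain of blowups along a single formal branch, so that $\widehat{R_i}\cong k[[x,y]]$ and $V$ is recovered from a formal power series arc $y=\sigma(x)\in k[[x]]$ (or possibly $x=\sigma(y)$) — this uses again the structure exhibited in Lemma \ref{Lemma1}. Because $V=\cup R_i$ is a valuation ring of $K=\operatorname{Frac}(R)$ and $R$ is excellent, $\sigma$ is algebraic over $k((x))$; one then produces in $\widehat{R_{i_0}}$ the height-one prime corresponding to the minimal polynomial factor cutting out $y=\sigma(x)$, localizes, and composes $V$ with the rank-$1$ valuation of the residue field of that prime to obtain an extension of $V$ to $\operatorname{Frac}(\widehat{R_{i_0}})$ of rank $2$. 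Thus the rank increases under completion with $T=R_{i_0}$.

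The main obstacle I expect is the forward-direction upgrade from "$V/m_V$ algebraic over $R/m_R$" to "$V/m_V$ finite over $R/m_R$": a priori the residue-field tower of the blowup sequence could grow without bound, and ruling this out requires identifying $V/m_V$ with the coefficient field $k(\{\alpha_i\})$ of an algebraic power series and then invoking the resolution-of-plane-curve-singularities argument of Theorem \ref{main} to see that the $\alpha_i$ already generate a finite extension — the subtlety being that Theorem \ref{fin} only gives finiteness of $kL^{p^r}$ in general, so I must argue that the geometric construction here actually produces the $\alpha_i$ (not just their $p^r$-th powers) inside the residue fields, which is exactly what the proof of Theorem \ref{main} establishes under the hypothesis that the arc genuinely lives in the completion. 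A secondary technical point is handling the case where $R$ is merely excellent and not regular, and where the initial birational modification and the passage to $\hat T$ must be reconciled with analytic normality of $T$; this is bookkeeping that parallels the reductions already made in the paragraphs preceding Theorem \ref{TheoremV1}.
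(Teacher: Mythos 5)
Your overall geometry is right (quadratic transforms along $V$, resolution of the formal branch responsible for the rank jump, stabilization of residue fields; and for the converse a Cauchy sequence of infinite value), but both directions as written lean on claims that the paper's own examples refute. In the forward direction the fatal step is the identification $V/m_V=\bigcup_j R_j/m_{R_j}=k(\{\alpha_i\})$ together with the assertion that the geometric construction produces the $\alpha_i$ themselves (not just their $p^r$-th powers) inside the residue fields. When the branch is inseparable this is exactly what does \emph{not} happen: in the construction underlying Theorem \ref{insep} the center on the $i$-th blowup is $(x,(y_{i-1}/x^{p^{\cdots}})^{p^{\lambda(i)}}-\alpha_i^{p^{\cdots}})$, so the residue field only acquires $\alpha_i^{p^{\lambda(1)+\cdots+\lambda(i-1)}}$. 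Example \ref{example2} is a direct counterexample to your identification: there $L=k(\{t_i^{1/p}\})$ is infinite over $k$ while $V/m_V=k(t_1^{1/p})$ is finite; and taken literally your argument would prove $[L:k]<\infty$ for every algebraic series, contradicting Example \ref{example}. A second gap is that the ``formal arc $y=\sigma(x)=\sum\alpha_ix^i$'' with integer exponents need not exist before resolution: the branch giving the rank jump can be of Artin--Schreier type as in (\ref{eqC}), which admits no such parametrization. The paper avoids both problems by never parametrizing: it takes a generator $f$ of the prime $p(R_0)_\infty\subset\hat R_0$ of infinite-value elements (nonzero precisely because the rank increases), applies embedded resolution to the plane curve germ $f=0$, and observes that once the strict transform is transverse to the exceptional divisor the subsequent centers are $(x,f_0/x^j)$, hence rational over the current residue field; so the tower $R_i/m_{R_i}$ stabilizes and $V/m_V$ is finite over $R/m_R$. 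That argument should replace your coefficient-field identification.

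In the converse direction your construction is essentially the paper's ($y_{i+1}=y_i-\alpha_ix^{n_i}$ with $\alpha_i$ lifted from the now-stable residue field, the limit $\sigma\in\hat R$ having value exceeding every $n\in\NN$), but the intermediate claim that ``$\sigma$ is algebraic over $k((x))$ because $V=\cup R_i$ is a valuation ring of $K$ and $R$ is excellent'' is false --- a transcendental $\sigma\in k[[x]]$ defines a perfectly good discrete rank $1$ valuation with $V/m_V=k$ whose rank increases under completion --- and it is also unnecessary. Once the residue fields stabilize, the limit of the $y_i$ is already a nonzero element of $\hat R$ of infinite value, so any extension $\hat\nu$ of $\nu$ dominating $\hat R$ has rank $2$; no minimal polynomial or height-one prime over $K$ needs to be produced. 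Delete that detour and this half of your argument coincides with the paper's.
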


\begin{proof}
First assume that the rank of $V$ increases under completion.
Consider the sequence (\ref{eq1}).
We observed above after (\ref{eq1}) that $V/m_V$ is algebraic over $R/m_R$
and $V$ is discrete of rank 1. Further, there exists $R_i$ and a valuation
$V_1$ of the quotient field  of $\hat R_i$ which dominates $\hat R_i$ whose intersection with the quotient field $K$ of $R$ is $V$, and the rank of $V_1$ is 2. Without loss of generality, we may assume that $R_i=R_0$.

For $i\ge 0$, let $p(R_i)_{\infty}$ be the (nontrivial) prime ideal in $\hat R_i$ of cauchy sequences whose value is greater than $n$ for any $n\in\NN$
(Section 5 of \cite{CG}). Since $\hat R_i$ is a two dimensional regular local ring, $p(R_i)_{\infty}$ is generated by an irreducible element in $\hat R_i$ for all $i$. Let $f$ be a generator of $p(R_0)_{\infty}$. By resolution of plane curve singularities (\cite{Ab1}, \cite{C}, \cite{O}), there exists $i$ in the sequence (\ref{eq1}) such that
$f=h_if_i$, where  $h_i\in R_i$ is such that $h_i=0$ is supported on the
exceptional locus of  $\mbox{Spec}(R_i)\rightarrow \mbox{Spec}(R)$,
and $f_i\in \hat R_i$ is such that $\hat R_i/f_i\hat R_i$ is a regular local ring.
We necessarily have that $p(R_i)_{\infty}=f_i\hat R_i$. Again, without loss of generality, we may assume that $i=0$. Let $T_0=\hat R_0$, and let
$$
T_0\rightarrow T_1\rightarrow \cdots \rightarrow T_n\rightarrow\cdots
$$
be the infinite sequence of regular local rings obtained by blowing up the maximal ideal of the regular local ring $T_i$ and localizing at the center of $V_1$.  We then have a commutative diagram
\begin{equation}\label{eq5}
\begin{array}{lllllllllll}
R_0&\rightarrow &R_1&\rightarrow&\cdots&\rightarrow &R_i&\rightarrow&\cdots\\
\downarrow&&\downarrow&&&&\downarrow&&\\
T_0&\rightarrow &T_1&\rightarrow&\cdots&\rightarrow &T_i&\rightarrow&\cdots\\
\downarrow&&\downarrow&&&&\downarrow&&\\
\hat R_0&\rightarrow &\hat R_1&\rightarrow&\cdots&\rightarrow &\hat R_i&\rightarrow&\cdots\\
\end{array}
\end{equation}

There exists $x\in R_0$ such that $x,f_0$ is a regular system of parameters in $T_0$.  Thus $T_1=T_0[\frac{f_0}{x}]_{(x,\frac{f_0}{x})}$. Define
$f_i=\frac{f_0}{x^i}$ for $i\ge 1$. Then $T_i=T_0[f_i]_{(x,f_i)}$ and
$p(R_i)_{\infty}=f_i\hat R_i$ for all $i\ge 0$. Thus $R_i/m_{R_i}\cong T_i/m_{T_i}\cong
T_0/(x,f_0)\cong R_0/m_{R_0}$ for all $i$. Since $V/m_V=\cup_{i\ge 0} R_i/m_{R_i}
=R_0/m_{R_0}$ and $R_0/m_{R_0}$ is finite over $R/m_R$, we have the conclusions of
the theorem.

Now assume that $V/m_V$ is finite over $R/m_R$ and $V$ is discrete of rank 1.
Consider the sequence (\ref{eq1}). There exists $i$ such that $R_i/m_{R_i}=V/m_V$. Without loss of generality, we may assume that $R=R_i$.
Let $\nu$ be  a valuation of $K$ such that $V$ is the valuation ring of $\nu$.
We may also assume that there are regular parameters $x,y$ in $R$ such that
$\nu(x)=1$ generates the value group $\ZZ$ of $\nu$.  Let $\pi:R\rightarrow R/m_R=V/m_V$ be the residue map. Let $y_0=y$.
There exists $n_0\in\NN$ such that $\nu(y)=n_0$. Let $\alpha_0\in R$ be such that $\pi(\alpha_0)=[\frac{y}{x^{n_0}}]\in V/m_V$. Let $y_1=y-\alpha_0x^{n_0}$,
and let $n_1=\nu(y_1)$. We have $n_1>n_0$. Iterate, to construct $y_i\in R$
and $n_i\in \NN$ with $\nu(y_i)=n_i$ for
$i\in \NN$ by choosing $\alpha_i\in R$ such that $y_{i+1}=y_i-\alpha_ix^{n_i}$
satisfies $n_{i+1}>n_i$. Thus $\{y_i\}$ is a Cauchy sequence in $R$. Let $\sigma$ be the limit of $\{y_i\}$ in $\hat R$.  Let $\hat \nu$ be an extension of $\nu$ to the quotient field of $\hat R$ which dominates $\hat R$.
Then $\hat\nu(\sigma)>n$ for all $n\in\NN$, so that $\hat \nu$ has rank $2>1$,
and we see that the rank of $V$ increases under completion.
\end{proof}

We see that the condition that $V/m_V$ is finite over $R/m_R$ thus divides the
class of discrete rank 1  valuation rings with $\mbox{dim}_R(V)=0$ into two subclasses, those
whose rank increases under completion ($[V/m_V:R/mR]<\infty$), and those whose rank does not increase ($[V/m_V:R/mR]=\infty$). We have the following precise
characterization of when this division into subclasses is nontrivial.

\begin{Corollary}\label{Cor} Suppose that $R$ is an excellent two dimensional local ring.
Then there exists a rank 1 discrete valuation ring $V$ of the quotient field of $R$ which dominates
$R$ such that $\mbox{dim}_R(V)=0$ and the rank of $V$ does not increase under completion if and
only if $[\overline k:k]=\infty$, where $\overline k$ is the algebraic closure of $k=R/m_R$.
\end{Corollary}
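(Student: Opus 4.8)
The plan is to prove both directions of Corollary \ref{Cor} by reducing to the machinery already developed. Set $k=R/m_R$ and let $\overline k$ be its algebraic closure.

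For the ``if'' direction, suppose $[\overline k:k]=\infty$. I want to produce a rank $1$ discrete valuation ring $V$ dominating $R$ with $\dim_R(V)=0$ whose rank does not increase under completion, which by Theorem \ref{TheoremV2} amounts to producing such a $V$ with $[V/m_V:R/m_R]=\infty$. The natural strategy is to pass to a regular local ring birationally dominating $R$ (by resolution of surface singularities, as in the setup of (\ref{eq1})) and work there, so without loss of generality assume $R$ is regular with regular parameters $x,y$ and residue field $k$. I would then try to build, on $\hat R\cong k[[x,y]]$, the valuation associated to a series $\sigma(x)=\sum_{i\ge1}\alpha_i x^i\in\overline k[[x]]$ with coefficients $\alpha_i$ chosen so that $L=k(\{\alpha_i\})$ has infinite degree over $k$ but $\sigma$ is still \emph{algebraic} over $k((x))$ — exactly the phenomenon of Example \ref{example}. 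Concretely, since $[\overline k:k]=\infty$ one can choose (after possibly enlarging to a finitely generated non-perfect subfield, or directly in the inseparable case) a sequence $\alpha_i\in\overline k$ with $[k(\alpha_1,\dots,\alpha_i):k]\to\infty$ while $\sigma^{p^r}\in k[[x]]$ for suitable $r$; if $k$ is perfect then $[\overline k:k]=\infty$ forces non-closure via separable extensions, and one instead needs a genuinely different argument — see below. Given such a $\sigma$, the divisorial-type valuation $\nu$ on $k[[x,y]]$ defined by $\nu(y-\sigma(x))=\infty$, i.e. the composite of the $x$-adic valuation with the order valuation along $y=\sigma(x)$, restricts to a rank $1$ discrete valuation $V$ on $K=\operatorname{Frac}(R)$ (rank $1$ because $\sigma$ being algebraic keeps things over $K$ one-dimensional in the right sense — this is where Theorem \ref{main}/\ref{fin} enters) with $V/m_V=L$, hence $[V/m_V:k]=\infty$, so by Theorem \ref{TheoremV2} its rank does not increase under completion. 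The case where $k$ is perfect but not algebraically closed must be handled separately: here one should instead realize $V$ directly from an infinite algebraic residue extension built by iterated blow-ups, choosing at each stage a point whose residue field strictly extends the previous one (possible precisely because $\overline k\ne k$), while keeping the value group equal to $\ZZ$; this produces $V$ with $V/m_V$ infinite algebraic over $k$ and $\nu$ discrete of rank $1$.

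For the ``only if'' direction, suppose $[\overline k:k]<\infty$, i.e. $\overline k$ is finite over $k$. Let $V$ be any rank $1$ discrete valuation ring dominating $R$ with $\dim_R(V)=0$. Then $V/m_V$ is algebraic over $k$, hence $V/m_V\subseteq\overline k$, so $[V/m_V:k]\le[\overline k:k]<\infty$; that is, $V/m_V$ is automatically finite over $R/m_R$. By Theorem \ref{TheoremV2} the rank of $V$ then increases under completion. So no $V$ of the desired type exists, which is the contrapositive of what we want.

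The main obstacle I anticipate is the ``if'' direction when the residue field $k$ is perfect: Example \ref{example} and the inseparability trick only deliver infinite-degree coefficient fields $L$ when $k$ is imperfect, so for perfect $k$ one genuinely needs the blow-up construction producing an infinite tower of residue-field extensions $R=R_0/m_{R_0}\subsetneq R_1/m_{R_1}\subsetneq\cdots$ with $\bigcup_i R_i/m_{R_i}=V/m_V$ infinite over $k$, while simultaneously controlling the value group so that $V$ stays discrete of rank $1$ (not rank $2$, and not of rational rank $2$). Verifying that one can always choose the sequence of points on the blow-ups to force strict residue growth — this uses $\overline k\ne k$, so each $\mathbb P^1$ over a residue field $\kappa$ with $\overline\kappa\ne\kappa$ has a closed point with strictly larger residue field — and that the resulting $V$ is indeed a discrete rank $1$ valuation dominating $R$, is the technical heart; the rest is a direct appeal to Theorem \ref{TheoremV2}. (In fact a uniform treatment covering both imperfect and perfect $k$ can be given purely through this blow-up construction, bypassing Example \ref{example} entirely, and that is probably the cleanest route.)
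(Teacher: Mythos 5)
Your closing recommendation --- build $V$ uniformly by an infinite tower of quadratic transforms $R_0\rightarrow R_1\rightarrow\cdots$, choosing at each stage a closed point of the exceptional $\PP^1$ whose residue field strictly extends $R_i/m_{R_i}$ (possible since $\overline k$ remains infinite over each $R_i/m_{R_i}$), and bypassing Example \ref{example} entirely --- is exactly the paper's proof. The paper makes this concrete by lifting the minimal polynomial $h_{i+1}(y_i/x)$ of some $\alpha_{i+1}\in\overline k\setminus R_i/m_{R_i}$ to get the new parameter $y_{i+1}$, and it settles the two points you flag as the technical heart as follows: rank $1$ (indeed rational rank $1$) is forced by the refined Abhyankar inequality \emph{because} $[V/m_V:k]=\infty$, and discreteness follows since $m_{R_i}R_{i+1}=xR_{i+1}$ gives $\nu(x)\le\nu(f)$ for all $f\in m_V$, so $\nu(x)$ generates the value group. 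Your ``only if'' direction coincides with the paper's verbatim.

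However, the route you lead with for imperfect $k$ contains a genuine error: the claim that the valuation attached to the algebraic series $\sigma$ of Example \ref{example} has $V/m_V=L$. It does not. The paper's Example \ref{example2} computes precisely this valuation and finds $V/m_V=k(t_1^{1/p})$, a \emph{finite} extension of $k$, even though the coefficient field $L=k(\{t_i^{1/p}\})$ is infinite over $k$; indeed a central theme of the paper is that the residue field of the induced valuation and the coefficient field of the series are governed by different finiteness conditions. The mechanism is that algebraicity of $\sigma$ forces the strict transform of its minimal polynomial to become a regular parameter after finitely many blow-ups, after which the residue field stabilizes --- this is the content of the proofs of Theorems \ref{main} and \ref{insep}. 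So a series with $[L:k]=\infty$ that is algebraic over $k((x))$ can never produce the desired $V$; had you committed to that route in the imperfect case, the argument would have collapsed there. Fortunately your fallback (the blow-up tower) is both correct and, as you suspected, the uniform and cleanest treatment.
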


\begin{proof} Suppose that  $[\overline k:k]<\infty$, and $V$ is a rank 1 discrete
 valuation ring  of the quotient field of $R$
which dominates $R$ such that $\mbox{dim}_R(V)=0$. Then
$$
[V/m_V:k]\le [\overline k:k]<\infty.
$$
Thus the rank of $V$ must increase under completion by Theorem \ref{TheoremV2}.

Now suppose that $[\overline k:k]=\infty$. We will construct a rank 1 discrete
valuation ring $V$ of the quotient field of $R$ which dominates $R$ such that
$\mbox{dim}_R(V)=0$ and the rank of $V$ does not increase under completion.

There exists a two dimensional regular local ring $R_0$ which birationally
dominates $R$. We have $[\overline k:R_0/m_{R_0}]=\infty$.
Let $x,y_0$ be a regular system of parameters in $R_0$. We will inductively construct an infinite birational sequence of regular local rings
$$
R_0\rightarrow R_1\rightarrow\cdots\rightarrow R_i\rightarrow\cdots
$$
such that $R_i$ has a regular system of parameters $x,y_i$ and
$[R_i/m_{R_i}:R_{i-1}/m_{R_{i-1}}]>1$ for all $i$.
Suppose that we have defined the sequence out to $R_i$. Choose
$\alpha_{i+1}\in\overline k-R_i/m_{R_i}$. Let $h_{i+1}(t)$ be the minimal polynomial of $\alpha_{i+1}$ in the polynomial ring $R_i/m_{R_i}[t]$.
We have an isomorphism
$$
R_i\left[\frac{m_{R_i}}{x}\right]/x R_i\left[\frac{m_{R_i}}{x}\right]
\cong R_i/m_{R_i}\left[\frac{y_i}{x}\right].
$$
Let $y_{i+1}$ be a lift of $h_{i+1}(\frac{y_i}{x})$ to $R_i\left[\frac{m_{R_i}}{x}\right]$. Let
$$
R_{i+1}=R_i\left[\frac{m_{R_i}}{x}\right]_{(x,y_{i+1})}.
$$
We have that $R_{i+1}/m_{R_{i+1}}\cong R_i/m_{R_i}(\alpha_{i+1})$.

Let $V=\cup_{i=0}^{\infty}R_i$. $V$ is a valuation ring which dominates $R$
(as is shown in \cite{Ab2}).
$V/m_V=\cup_{i=0}^{\infty}R_i/m_{R_i}$ so that $\mbox{dim}_R(V)=0$
and $[V/m_V:k]=\infty$.

$V$ must have rank 1 since $[V/m_V:k]=\infty$ (for instance by the Abhyankar inequality, \cite{Ab2} or  Proposition 3 \cite{ZS}). By our construction, $\nu(x)\le\nu(f)$ for
any $f\in m_V=\cup_{i=1}^{\infty}m_{R_i}$. Thus the value group of $V$ is
discrete. Since $[V/m_V:k]=\infty$, by  Theorem \ref{TheoremV2} the rank of $V$ does not increase under completion.
\end{proof}

When a valuation ring $V$ with quotient field $K$ is equicharacteristic and  discrete of rank 1, it can be  explicitly
described by a representation in a power series ring in one variable over
the residue field of $V$. In fact, since $V$ is discrete of rank 1, it is
Noetherian (Theorem 16, Section 10, Chapter VI \cite{ZS}). As $V$ is equicharacteristic, the $m_V$-adic completion $\hat V$ of $V$ has a coefficient
field $L$ by Cohen's theorem, and thus $\hat V\cong L[[t]]$ is a power series ring in one variable over $L\cong V/m_V$. We have $V=K\cap \hat V$. The subtlety of this statement is that if $k$ is a subfield of $K$ contained in $V$ such that
$V/m_V$ is not separably generated over $k$, then there may not exist a
coefficient field $L$ of $\hat V$ which contains $k$.

Although the completion of a rank 1 valuation ring is a power series ring,
in positive characteristic,
the  valuation determined by  associating to a system of parameters specific  power series may not be easily recognizable from a series representation of the valuation ring. This can be seen from
 the contrast of the  conclusions of
Theorem \ref{varfin} with the results of this section.  The finiteness condition
$[L:k]<\infty$ of the coefficient field of a series over a base field $k$
does not characterize algebraicity of a series in positive characteristic,
while the corresponding finiteness condition on residue field extensions does
characterize algebraicity in the case of valuations dominating a local ring of Theorem \ref{TheoremV2}. We illustrate this distinction in the following example.

\begin{Example}\label{example2} The valuation induced by the series of
Example \ref{example}, whose coefficient field is infinitely
algebraic over the base field $k$, has a residue field which is
finite over $k$.
\end{Example}

\begin{proof} With notation of Example \ref{example},
we have a $k$-algebra homomorphism
$$
R=k[u,v]_{(u,v)}\stackrel{\pi}{\rightarrow} \overline k[[x]]
$$
defined by the substitutions
$$
u=x, v=\sigma(x)=\sum_{i=1}^{\infty} t_i^{\frac{1}{p}}x^i.
$$
$\pi$ is 1-1 since $x,y$ and the $t_i^{\frac{1}{p}}$ are algebraically independent over $k$. The order valuation  on $\overline k[[x]]$
induces a rank 1 valuation $\nu$ on the quotient field of $R$.
Let $v_1=(\frac{v}{u})^p-t_1$.

$R_1=R[\frac{v}{u}]_{(u,v_1)}$ is dominated by $\nu$.
From the expansion
$$
v_1=\sum_{i=1}^{\infty}t_{i+1}x^{ip},
$$
we inductively define
$$
v_{j+1}=\frac{v_j}{u^p}-t_{j+1}=\sum_{i=1}^{\infty}t_{i+j}x^{ip}
$$
 and
$$
R_{j+1}=R_i[\frac{v_j}{u^p}]_{(u,v_{j+1})}
$$
for $j\ge 1$.
The $R_j$ are dominated by $\nu$ for all $j$, so that $V=\cup_{j\ge 1}R_j$
is the valuation ring of $\nu$.   We have that the residue field of $V$ is $V/m_V=R_1/m_{R_1}=k(t_1^{\frac{1}{p}})$. This is a finite extension of $k$,
in contrast to the fact that the field of coefficients $L=k(\{t_i^{\frac{1}{p}}\mid i\in\NN\})$ of $\sigma(x)$
has infinite degree over $k$.
\end{proof}

An especially strange representation  of a rank 2 discrete valuation is
given by the example (\ref{eqC}) of a power series whose exponents have  unbounded denominators.

 Let $k$ be a field of characteristic $p>0$, and consider the series
\begin{equation}\label{strseries}
\sigma=\sum_{i=1}^{\infty}x^{1-\frac{1}{p^i}}
\end{equation}
of (\ref{eqC}). $\sigma$ is algebraic over $k(x)$, with irreducible relation $\sigma^p-x^{p-1}\sigma-x^{p-1}=0$.

Consider the two dimensional regular local ring
$R_0=k[x,y]_{(x,y)}$. $x$ and $y$ are regular parameters in $R_0$. Let $y=\sigma(x)$. We see from (\ref{strseries}) that $y$ does not have a
fractional power series representation in terms of $x$. However, by expanding $x$ in terms of $y$, we have an expansion
\begin{equation}\label{eqseries}
x=y^{\frac{p}{p-1}}(1+y)^{-\frac{1}{p-1}}
\end{equation}
which represents $x$ as a  fractional power series in $y$ with bounded denominators.

Let $g=y^p-x^{p-1}y-x^{p-1}\in R_0$.  $g=0$
has a singularity of order $p-1$ in $R$. Let
$$
R_1=R[\frac{x}{y},y]_{(\frac{x}{y},y)}.
$$
$x_1=\frac{x}{y}$ and $y$ are regular parameters in $R_1$. $g=y^{p-1}g_1$, where
$$
g_1=y-x_1^{p-1}y-x_1^{p-1}
$$
is a strict transform of $g$ in $R_1$. $g_1=0$ is nonsingular.
From the equation $g_1=0$ we deduce that
$$
\begin{array}{lll}
y&=&x_1^{p-1}(1-x_1^{p-1})^{-1}\\
&=&x_1^{p-1}(1+x_1^{p-1}+x_1^{2(p-1)}+\cdots)\\
&=&\sum_{i=1}^{\infty}x_1^{i(p-1)},
\end{array}
$$
obtaining a standard power series expansion of $y$ in terms of $x$.

We obtain a fractional power series of $x_1$ in terms of $y$ with bounded denominators either from the equation
$g_1=0$, or by substitution in (\ref{eqseries}).

\par\vskip.5truecm\noindent
Steven Dale Cutkosky,  \hfill Olga Kashcheyeva,\par\noindent Dept.
Math.\hfill Dept. Math.\par\noindent University of Missouri,\hfill
University of Illinois, Chicago\par\noindent Columbia, MO 65211
USA\hfill Chicago, IL 60607 \par\noindent
cutkoskys@missouri.edu\hfill olga@math.uic.edu\par\noindent

\end{document}